\documentclass[12pt]{article}
\pdfoutput=1

\usepackage{amssymb, amsmath, amsthm}
\usepackage{colonequals}
\usepackage{breqn}
\usepackage{bm}
\usepackage{graphicx}
\usepackage{titlesec}
\usepackage{subcaption}

\newtheorem{theorem}{Theorem}[section]

\newtheorem{lemma}[theorem]{Lemma}
\newtheorem{definition}[theorem]{Definition}

\newtheorem{example}[theorem]{Example}

\newcommand{\Po}{\mathcal{P}}
\newcommand{\R}{\mathbb{R}}
\newcommand{\conv}{\text{conv}}

\DeclareMathOperator{\Vol}{vol}
\allowdisplaybreaks

\titleformat{\section}{\normalsize\bfseries}{\thesection}{1.0em}{}

\title{Computing the volume of the convex hull of the
graph of a trilinear monomial using mixed
volumes\footnote{These results were first announced in a 2-page abstract presented at the 5th International Symposium on Combinatorial Optimization (ISCO 2018).} }
\author{{ Emily Speakman and Gennadiy Averkov} \\
{\footnotesize Institute of Mathematical Optimization}\\
{\footnotesize Otto-von-Guericke-Universit\"{a}t, Magdeburg, Germany}
}
\date{\vspace*{-17mm}}

\begin{document}

\maketitle

\begin{abstract} Speakman and Lee (2017) gave a formula for the volume of the convex hull of the graph of a trilinear monomial, $y=x_1x_2x_3$, over a box in the nonnegative orthant, in terms of the upper and lower bounds on the variables.  This was done in the context of using volume as a measure for comparing alternative convexifications to guide the implementation of spatial branch-and-bound for mixed integer nonlinear optimization problems.  Here, we introduce an alternative method for computing this volume, making use of the rich theory of mixed volumes. This new method may lead to a natural approach for considering extensions of the problem.
\end{abstract}

{\center{\bf Key words:} spatial branch-and-bound, convexification, trilinear, polytope, mixed volume. \\[0.0pc]}


\section{Introduction.}\label{intro}
For factorable mixed integer nonlinear optimization (MINLO) problems (see \cite{McCormick76}), spatial branch-and-bound (sBB) is the main algorithmic framework used to find globally optimal solutions (see \cite{Adjiman98}, \cite{Ryoo96}, \cite{Smith99}). At a high level, sBB works by generating a convex relaxation of the problem over a given domain to obtain a bound, and then branching on the domain of a variable and re-convexifying to obtain better bounds.  Therefore, the quality of the convex relaxation obtained is very important in the success of the algorithm.  These problem relaxations are achieved by composing convex relaxations of the simple, low-dimensional functions present in the formulation (over an appropriate domain) and therefore, there has been considerable research into acquiring good convex relaxations of (in particular) multilinear functions, for example see \cite{JMW08}, \cite{Sherali1990}, \cite{Rikun97}, \cite{Meyer04a}, \cite{Meyer04b}.

A reasonable measure for comparing alternative convex relaxations in $\R^n$ is their $n$-dimensional volume.  This was introduced in \cite{Lee94} and has been further developed as a measure to compare polyhedral relaxations (see \cite{LSS2018} and the references therein); to obtain guidance for branching point selection (see \cite{SpeakmanLee_Branching}); and as measure to determine the strength of cuts (see \cite{BCSZ} and \cite{DM}).  In particular, \cite{SpeakmanLee2015} used 4-dimensional volume to compare alternative convexifications of the graph of a trilinear monomial, $y=x_1x_2x_3$, over a box, $x_i \in [a_i, b_i], \; 0 \leq a_i < b_i, \; i=1,2,3$, in the nonnegative orthant.  They calculated the 4-dimensional volume of various natural convexifications of  the graph of $y=x_1x_2x_3$, and used this as a measure to compare the tightness of those convexifications.  A key part of \cite{SpeakmanLee2015} was to compute the 4-dimensional volume of the convex hull.  Define
$$\Po^{3}_H \colonequals \text{conv}\left\{(y,x_1,x_2,x_3) \in \R^4 : y= x_1x_2x_3, \;\; x_i \in [a_i,b_i], \; i=1,2,3 \right\}.$$
 The extreme points of $\Po^3_{H}$ are the eight points that correspond to the $2^3=8$ choices of each $x$-variable at its upper or lower bound (\cite{Rikun97}).  Furthermore, the inequalities of $\Po^3_{H}$ were characterized in \cite{Meyer04a}, \cite{Meyer04b}.

Following \cite{SpeakmanLee2015} (and \cite{Meyer04b}), in the remainder of this paper we will adopt the following convention to label the $x$-variables.  For the variable domains $x_i \in [a_i,b_i]$, $i=1,2,3$, we assume
\begin{equation}
\label{Omega}
 \begin{split}
&0 \leq a_i < b_i \;\text{for}\; i=1,2,3, \quad\text{and} \\
&a_1b_2b_3+b_1a_2a_3 ~\leq~ b_1a_2b_3 + a_1b_2a_3 ~\leq~ b_1b_2a_3 + a_1a_2b_3.
\end{split}
\tag{$\Omega$}
\end{equation}

To see that the latter two inequalities are without loss of generality, let $\mathcal{O}_i \colonequals a_i(b_jb_k) + b_i(a_ja_k)$, where $\{i,j,k\}=\{1,2,3\}$.  Then after re-indexing, we can assume $\mathcal{O}_1 \leq \mathcal{O}_2 \leq \mathcal{O}_3$.  Note that the \eqref{Omega} condition is equivalent to:
\begin{equation}
 \begin{split}
&0 \leq a_i < b_i \;\text{for}\; i=1,2,3, \quad\text{and} \\
&\frac{a_1}{b_1} \leq \frac{a_2}{b_2} \leq \frac{a_3}{b_3}.
\end{split}
\tag{$\Omega^{'}$}
\end{equation}
For more details, see Lemma \ref{lem10.1} (in the appendix).
\newpage

Now we state a main result of \cite{SpeakmanLee2015}.
\begin{theorem}[Theorem 4.1 in \cite{SpeakmanLee2015}]
\label{TheoremPH}
Assuming \eqref{Omega} holds,
\begin{dmath*}
\Vol(\Po^3_H) = (b_1-a_1)(b_2-a_2)(b_3-a_3)\times \\ \left(b_1(5b_2b_3-a_2b_3-b_2a_3-3a_2a_3) + a_1(5a_2a_3-b_2a_3-a_2b_3-3b_2b_3)\right)/24.
\end{dmath*}
\end{theorem}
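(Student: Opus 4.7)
The plan is to reduce the 4-dimensional volume of $\Po^3_H$ to a combination of 3-dimensional mixed volumes by slicing perpendicularly to the $x_1$-axis.

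Set $S_a, S_b \subset \R^3$ to be the convex hulls of the four vertices of $\Po^3_H$ whose $x_1$-coordinate is $a_1$ (resp.\ $b_1$), projected to the coordinates $(y, x_2, x_3)$. A direct $3\times 3$ determinant computation shows that each is a (possibly degenerate) tetrahedron, with
$$\Vol(S_a) = \frac{a_1(b_2-a_2)^2(b_3-a_3)^2}{6}, \qquad \Vol(S_b) = \frac{b_1(b_2-a_2)^2(b_3-a_3)^2}{6}.$$

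By splitting any convex combination of the eight vertices of $\Po^3_H$ according to the $x_1$-coordinate, the slice $\Po^3_H \cap \{x_1 = (1-\lambda)a_1 + \lambda b_1\}$ equals the Minkowski combination $(1-\lambda)S_a + \lambda S_b$. Applying Fubini and expanding via Minkowski's theorem on mixed volumes, then using $\int_0^1 (1-\lambda)^{3-k}\lambda^k\, d\lambda = (3-k)!\,k!/4!$ to cancel the Minkowski binomial coefficients, yields
$$\Vol(\Po^3_H) \;=\; \frac{b_1 - a_1}{4}\bigl(V_0 + V_1 + V_2 + V_3\bigr),$$
where $V_k := V\bigl(\underbrace{S_a,\ldots,S_a}_{3-k}, \underbrace{S_b,\ldots,S_b}_{k}\bigr)$ and the extremal terms $V_0, V_3$ coincide with $\Vol(S_a), \Vol(S_b)$ above.

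The main obstacle is the computation of the two genuinely mixed quantities $V_1$ and $V_2$. For this, the plan is to construct a common refinement of the outer normal fans of $S_a$ and $S_b$, inducing a compatible polytopal decomposition of $S_a + S_b$ from which each mixed volume can be read off as a sum of elementary pieces. The assumption \eqref{Omega}, equivalently the monotonicity $a_1/b_1 \leq a_2/b_2 \leq a_3/b_3$, is essential here: it pins down the relative orientation of the two tetrahedra, so that a single combinatorial type governs the refinement and a uniform case analysis suffices. With $V_1$ and $V_2$ in closed form, summing the four $V_k$'s and factoring should produce the common factor $(b_1-a_1)(b_2-a_2)(b_3-a_3)/24$ together with the polynomial in parentheses stated in the theorem.
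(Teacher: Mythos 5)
Your skeleton is sound and is, in essence, the same strategy as the paper's: $\Po^3_H$ is the convex hull of two parallel $3$-dimensional tetrahedra, its cross-sections are Minkowski convex combinations of those tetrahedra, and Fubini plus the Minkowski expansion reduces the $4$-volume to $\frac{\text{width}}{4}\sum_{k=0}^{3}V_k$. The only structural difference is that you slice along $x_1$ while the paper slices along $x_3$ (defining tetrahedra $Q$, $R$ in the $(y,x_1,x_2)$-coordinates at levels $x_3=a_3,b_3$); both choices are legitimate since every coordinate direction splits the eight vertices into two parallel quadruples. Your Beta-integral bookkeeping and the values of $\Vol(S_a)$, $\Vol(S_b)$ check out.

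The genuine gap is that the decisive step --- obtaining closed forms for $V_1=V(S_a,S_a,S_b)$ and $V_2=V(S_a,S_b,S_b)$ --- is announced as a plan but not carried out, and this is where essentially all of the work and all of the use of \eqref{Omega} resides. In the paper this step occupies the bulk of the argument: the mixed volumes are evaluated via $V(P,P,K)=\frac{1}{3}\sum_{\bm{u}\in\mathcal{U}(P)}h_K(\bm{u})$ (Theorem~\ref{mixvolT} together with Lemma~\ref{UT}), and each of the eight resulting support-function values requires a four-way maximization (Lemmas~\ref{lem1}--\ref{lem8}) whose resolution depends on the ordering inequalities of Lemma~\ref{lem10.1}. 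Your assertion that \eqref{Omega} forces ``a single combinatorial type'' for the mixed subdivision of $S_a+S_b$ is plausible but unproven, and note that since you slice along $x_1$ the relevant sign conditions are \emph{not} the index-permuted images of the paper's (under \eqref{Omega} one has $b_1a_2-a_1b_2\ge 0$, i.e., the reverse orientation from what a naive relabeling $3\mapsto 1$ would suggest), so the case analysis must be redone from scratch rather than borrowed. Without explicit values of $V_1$ and $V_2$ the final polynomial cannot be produced; for reference, your decomposition closes to the stated formula if and only if
\begin{equation*}
V_1+V_2=\tfrac{2}{3}\,(b_1-a_1)(b_2-a_2)(b_3-a_3)(b_2b_3-a_2a_3),
\end{equation*}
which is the identity you would still need to establish. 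A secondary loose end: when $a_1=0$ the body $S_a$ is $2$-dimensional, so any facet-normal or normal-fan computation needs either a separate treatment or the continuity argument the paper invokes after assuming $a_i>0$.
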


We can see that $x_2$ and $x_3$ are interchangeable in the volume formula, but $x_1$, when labeled according to \eqref{Omega}, is somehow `special'.  This is exactly what we would expect following the work of \cite{Meyer04a} and \cite{Meyer04b}, which gives the inequality description of $\Po^3_H$.

The original volume proof from \cite{SpeakmanLee2015} works by constructing a triangulation of $\Po^3_H$.  In this work, we present an alternative method for the proof of Theorem~\ref{TheoremPH} using the theory of mixed volumes.   This method, unlike the original, seems to lend itself naturally to some extensions of the problem, as well as being (arguably) a more elegant solution.  Furthermore, it may be possible to use the ideas demonstrated here to compute volumes of other polyhedral relaxations that are interesting to the optimization community.

The paper is structured as follows.  In \S \ref{prelim}, we introduce the concepts needed to state our proof in \S \ref{proof}. In \S \ref{conc}, we consider how this method can be used to extend the work of \cite{SpeakmanLee2015}.  An appendix, \S \ref{A}, contains technical lemmas used for the proof.

\section{Preliminaries.} \label{prelim}

We state definitions, lemmas and theorems that will be important for our proof in the following section.  For the remainder of the paper, we assume the notational convention of using bold lower case letters for vectors, bold upper case letters for matrices, and lower case letters for scalars. Vectors are column vectors, with the transpose of a vector, $\bm{x}$, denoted by $\bm{x}^T$.  $n$ is a positive integer representing the dimension. $\bm{e}_i$, for $i=1\ldots n$, are the standard unit vectors in $\R^n$.  $\mathcal{K}^n$ is the set of all nonempty compact convex sets in $\R^n$. $\R_+$ is the set of nonnegative real numbers.  $\text{vol}(\cdot), \text{aff}(\cdot),$ and $\text{conv}(\cdot)$ refer to the volume, affine hull and convex hull respectively.

We start by stating the definition of mixed volume (see Theorem 5.1.7 in \cite{Schneider2}):

\begin{theorem}\emph{(and Definition)}.  There is a unique, nonnegative function, $V:\underbrace{\mathcal{K}^n \times \cdots \times \mathcal{K}^n}_{n} \rightarrow \R$, the \emph{mixed volume}, which is invariant under permutation of its arguments, such that, for every positive integer $m>0$, one has

$$\Vol(t_1K_1+ t_2K_2+ \dots + t_mK_m) = \sum_{i_1,\dots, i_n =1}^{m}t_{i_1}\dots t_{i_n}V(K_{i_1}, \dots, K_{i_n}),$$
for arbitrary $K_1, \dots K_m \in \mathcal{K}^n$ and $t_1,t_2,\dots,t_n \in \R_{+}$.
\label{thmv}
\end{theorem}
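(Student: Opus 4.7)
The plan is to establish the theorem in three stages: first show that the volume of a Minkowski combination is a polynomial in the coefficients $t_i$, then extract and symmetrize the coefficients to define $V$, and finally verify uniqueness and nonnegativity.

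For the polynomial structure, I would begin with the case where each $K_i$ is a convex polytope. Fixing the polytopes, as the vector $\bm{t} = (t_1,\ldots,t_m)$ varies over $\R_+^m$, the Minkowski combination $P(\bm{t}) \colonequals t_1 K_1 + \cdots + t_m K_m$ is again a polytope whose normal fan is the common refinement of the normal fans of the $K_i$. Over a fixed cell of this refinement the combinatorial type of $P(\bm{t})$ does not change, and the support numbers satisfy $h_{P(\bm{t})}(u) = \sum_i t_i h_{K_i}(u)$ linearly in $\bm{t}$. Using the cone (pyramid) decomposition $\Vol(P) = \frac{1}{n}\sum_{F} h_P(u_F)\,\Vol_{n-1}(F)$ together with the fact that each facet of $P(\bm{t})$ is itself a Minkowski combination of lower-dimensional pieces coming from the $K_i$, an induction on $n$ yields that $\Vol(P(\bm{t}))$ is a polynomial in $\bm{t}$ that is homogeneous of degree $n$.

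Once polynomiality is established for polytopes, I would extend it to arbitrary $K_i \in \mathcal{K}^n$ by approximation. Every convex body is a Hausdorff limit of polytopes, Minkowski addition is continuous in the Hausdorff metric, and $\Vol(\cdot)$ is continuous on $\mathcal{K}^n$, so $\Vol(t_1 K_1 + \cdots + t_m K_m)$ remains a polynomial of degree at most $n$ in $\bm{t}$ on $\R_+^m$. Scalar homogeneity $\Vol(sK) = s^n\Vol(K)$ pins the degree at exactly $n$. Writing this polynomial in the symmetric form $\sum_{i_1,\ldots,i_n=1}^m t_{i_1}\cdots t_{i_n} V(K_{i_1},\ldots,K_{i_n})$ uniquely determines a permutation-invariant function $V$. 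For nonnegativity, the diagonal value $V(K',\ldots,K') = \Vol(K') \geq 0$ is immediate, and the general case follows from the inclusion--exclusion identity $V(K_1,\ldots,K_n) = \tfrac{1}{n!}\sum_{\emptyset \neq I \subseteq [n]}(-1)^{n-|I|}\Vol\bigl(\sum_{i \in I} K_i\bigr)$ combined with the Brunn--Minkowski inequality, or alternatively by isolating the coefficient of a multi-index through appropriate limits of nonnegative $t$-vectors.

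The main obstacle will be the polynomial structure for polytopes: a priori the volume formula is only piecewise polynomial across the cells of the common refinement of the normal fans, and one must verify that adjacent pieces agree to produce a single global polynomial on $\R_+^m$. Once this is settled, the continuity argument, the symmetrization, and the nonnegativity verification fit together in a relatively routine way.
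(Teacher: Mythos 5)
The paper never proves this statement: it is quoted directly from the literature (Theorem 5.1.7 of Schneider's \emph{Convex Bodies: The Brunn--Minkowski Theory}), so there is no internal proof to compare against, and your outline is essentially the classical Minkowski/Schneider argument that the citation points to: polynomiality of $\Vol(t_1K_1+\cdots+t_mK_m)$ for polytopes via the pyramid decomposition $\Vol(P)=\frac{1}{n}\sum_{F}h_P(\bm{u}_F)\Vol_{n-1}(F)$ and induction on $n$, extension to general bodies by Hausdorff approximation (using that a pointwise limit of polynomials of degree at most $n$ is again such a polynomial, since the coefficients depend linearly on finitely many values), and extraction of the symmetric coefficients, which also gives uniqueness because the coefficient of $t_1\cdots t_n$ in the case $m=n$ is $n!\,V(K_1,\ldots,K_n)$. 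One simplification on the polytope step: for strictly positive $\bm{t}$ the normal fan of $t_1K_1+\cdots+t_mK_m$ is the \emph{fixed} common refinement of the normal fans of the $K_i$, independent of $\bm{t}$, so on the open orthant there is only one ``cell'' and the gluing problem you flag as the main obstacle arises only where some $t_i=0$; that boundary is absorbed by continuity. The induction does require noting that each facet of the sum in direction $\bm{u}$ is itself the combination $\sum_i t_i F(K_i,\bm{u})$ of the corresponding faces of the summands.

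The genuine gap is the nonnegativity step. Your fallback of ``isolating the coefficient of a multi-index through appropriate limits of nonnegative $t$-vectors'' cannot work: a polynomial that is nonnegative on $\R_+^m$ may have negative coefficients (e.g.\ $t_1^2-t_1t_2+t_2^2$), so nonnegativity of the values of $\Vol(\cdot)$ does not transfer to the coefficients $V(K_{i_1},\ldots,K_{i_n})$. The inclusion--exclusion identity combined with Brunn--Minkowski also does not obviously control the sign of the alternating sum except in the special case $V(K_1,K_0,\ldots,K_0)$, where it reduces to Minkowski's first inequality. The standard argument is again the polytope formula: after translating so that $\bm{0}\in P_1$, one has $V(P_1,\ldots,P_n)=\frac{1}{n}\sum_{\bm{u}}h_{P_1}(\bm{u})\,v\bigl(F(P_2,\bm{u}),\ldots,F(P_n,\bm{u})\bigr)$, where $v$ denotes the $(n-1)$-dimensional mixed volume of the faces with outer normal $\bm{u}$; here $h_{P_1}\geq 0$ and the lower-dimensional mixed volumes are nonnegative by induction on $n$, and the general case follows by approximation. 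With that repair, your proposal is exactly the proof the paper delegates to Schneider.
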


\begin{theorem}
\label{mv2}
The \emph{mixed volume} function satisfies the following properties:
\begin{align*}
\text{(i)}&\qquad \Vol(K_1) = V(K_1,\dots,K_1). \\[2.5mm]
\text{(ii)}&\qquad V(t'K_1'+t''K_1'',K_2,\dots, K_n) = t'V(K'_1, K_2, \dots, K_n) \\
&\qquad \qquad \qquad  \qquad \qquad \qquad \qquad \qquad \qquad\quad + t''V(K''_1, K_2, \dots, K_n),
\end{align*}
for $K_1, \dots K_n \in \mathcal{K}^n$ and $t',t'' \in \R_{+}$.
\end{theorem}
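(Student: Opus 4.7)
The plan is to derive both parts directly from the polynomial identity in Theorem \ref{thmv}, together with the symmetry of $V$ and the uniqueness of coefficients of a polynomial in several nonnegative real variables.

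For (i), I would apply Theorem \ref{thmv} with $m = 1$, a single body $K_1 \in \mathcal{K}^n$, and coefficient $t_1 = 1$. The sum on the right collapses to the single term $V(K_1, \dots, K_1)$, and the left-hand side is $\Vol(K_1)$, giving the identity at once.

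For (ii), the idea is to expand one Minkowski combination in two different ways and equate coefficients of a well-chosen monomial. Fix $t', t'' \in \R_{+}$, set $\tilde K \colonequals t' K_1' + t'' K_1''$, and for free parameters $s_1, \dots, s_n \in \R_{+}$ consider
\[
s_1 \tilde K + s_2 K_2 + \cdots + s_n K_n = (s_1 t') K_1' + (s_1 t'') K_1'' + s_2 K_2 + \cdots + s_n K_n.
\]
Applying Theorem \ref{thmv} on the left-hand side with the $n$ bodies $\tilde K, K_2, \dots, K_n$ and on the right-hand side with the $n+1$ bodies $K_1', K_1'', K_2, \dots, K_n$ produces two polynomial expressions in $s_1, \dots, s_n$ (with coefficients depending on $t', t''$) that agree as functions on $\R_{+}^n$, hence coefficient by coefficient. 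I would then read off the coefficient of the monomial $s_1 s_2 \cdots s_n$ on each side: on the left, the $n!$ permutations of $(1, 2, \dots, n)$ each contribute $V(\tilde K, K_2, \dots, K_n)$ by symmetry of $V$, totalling $n! \, V(\tilde K, K_2, \dots, K_n)$; on the right, the multi-indices contributing to $s_1 s_2 \cdots s_n$ are those that use either the label $1'$ exactly once or the label $1''$ exactly once, together with each of $2, \dots, n$ exactly once, summing to $n! \, t' V(K_1', K_2, \dots, K_n) + n! \, t'' V(K_1'', K_2, \dots, K_n)$. Dividing by $n!$ yields (ii).

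I expect the main obstacle to be the bookkeeping when extracting the coefficient of $s_1 s_2 \cdots s_n$ from the $(n+1)$-body expansion: one must carefully use the symmetry of $V$ to collapse the $n!$ orderings that produce the same mixed volume, and must separately track the $1'$ and $1''$ contributions. Beyond that the argument is essentially formal, reducing to the fact that two polynomials in $s_1, \dots, s_n$ agreeing on $\R_{+}^n$ are identical.
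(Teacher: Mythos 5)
Your proposal is correct. Part (i) is essentially the paper's own argument: the paper specializes Theorem \ref{thmv} by taking $t_1=1$ and all other coefficients zero, which amounts to the same collapse of the sum as your $m=1$ instantiation. For part (ii), however, the paper gives no argument at all --- it simply cites Equation (5.26) of Schneider's book --- whereas you actually derive Minkowski linearity from the defining polynomial identity. Your derivation is sound: expanding $\Vol(s_1(t'K_1'+t''K_1'') + s_2K_2 + \cdots + s_nK_n)$ once with the $n$ bodies $\tilde K, K_2,\dots,K_n$ and once with the $n+1$ bodies $K_1',K_1'',K_2,\dots,K_n$ (with coefficients $s_1t'$, $s_1t''$, $s_2,\dots,s_n$, all nonnegative as Theorem \ref{thmv} requires) yields two polynomials agreeing on $\R_+^n$, hence identical; the coefficient of $s_1s_2\cdots s_n$ is $n!\,V(\tilde K,K_2,\dots,K_n)$ on one side and $n!\,\bigl(t'V(K_1',K_2,\dots,K_n)+t''V(K_1'',K_2,\dots,K_n)\bigr)$ on the other, since only tuples using exactly one of $K_1'$ or $K_1''$ contribute a monomial linear in $s_1$. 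What your route buys is self-containedness: (ii) is exposed as a purely formal consequence of Theorem \ref{thmv} together with the permutation invariance of $V$, with no further appeal to the literature. The only bookkeeping points worth making explicit in a final write-up are that $t'K_1'+t''K_1''$ is again in $\mathcal{K}^n$ (so the left-hand expansion is legitimate) and that agreement of two polynomials on $\R_+^n$ forces equality of coefficients; both are standard.
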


\begin{proof}
(i) is easy to observe by setting $t_1=1$, $t_2 =  \cdots = t_m = 0$ in Theorem~\ref{thmv}, also see page 282 of \cite{Schneider2}.  For (ii), see Equation (5.26), again in \cite{Schneider2}.
\end{proof}

\begin{example}[Mixed Volume]\label{mv}
For the cube $K=[-1,1]^3$, and the regular octahedron $L=\conv(\pm \bm{e}_1, \pm \bm{e}_2, \pm \bm{e}_3)$ consider the behavior of $\Vol(K+ t L)$ as a function of $t \geq0$.

The polytope $K+ t L$ can be decomposed into the following: $K$, eight simplices attached to the vertices of $K$ which altogether comprise $t L$, the triangular prisms attached to the 12 edges of $K$, and the parallelepipeds sitting on the facets of $K$.  For an illustration, see Figure \ref{figmain}.

From this we can directly observe why $\Vol(K+ t L)$ is a cubic polynomial in $t$.  The constant term is $\Vol(K)$.  The cubic term is $t^3 \Vol(L)$.  The parallelepipeds attached to the facets of $K$ yield the linear term, whose coefficient coincides, up to a factor depending on the dimension, with $V(K,K,L)$. The triangular prisms attached to the edges yield the quadratic term, whose coefficient coincides, up to a factor depending on the dimension, with $V(K,L,L)$.

\begin{figure}[h]
\begin{subfigure}{.33\textwidth}
  \centering
  \includegraphics[width=\linewidth]{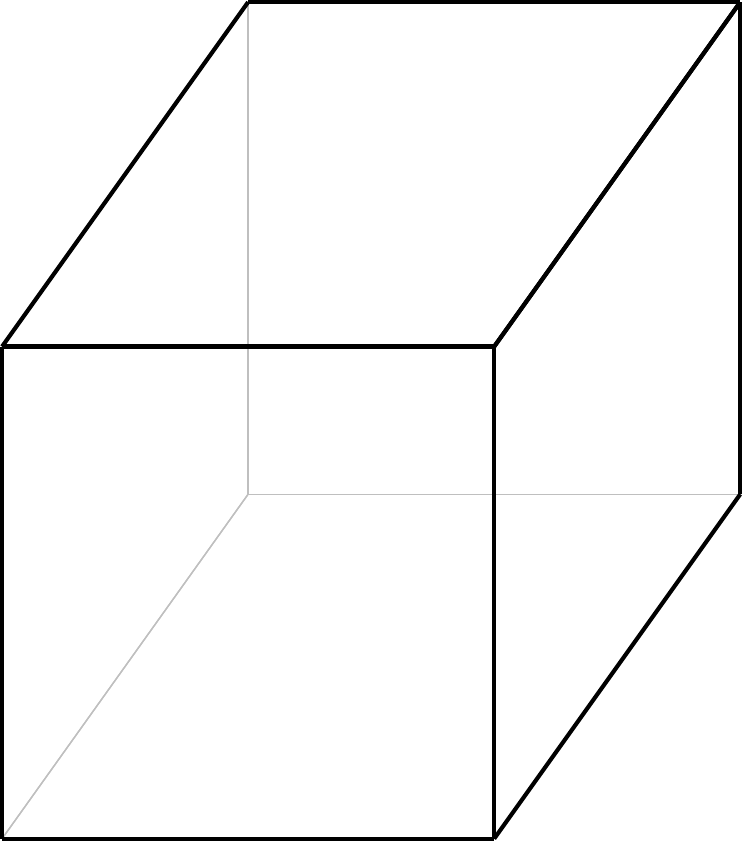}
  \caption{\footnotesize{$K=[-1,1]^3$}}\label{fig0}
\end{subfigure}
\begin{subfigure}{.33\textwidth}
  \centering
  \includegraphics[width=\linewidth]{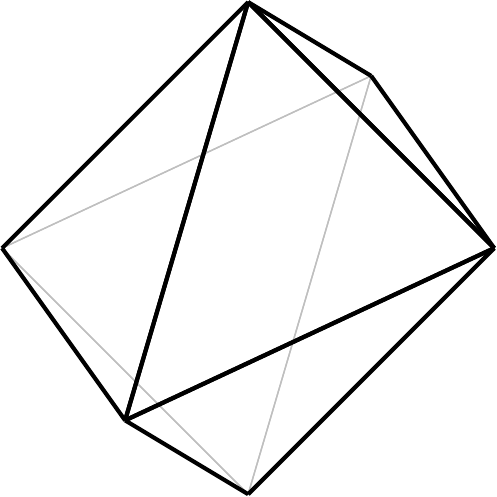}
  \caption{\footnotesize{$L=\conv(\pm \bm{e}_1, \pm \bm{e}_2, \pm \bm{e}_3)$}}\label{fig1}
\end{subfigure}
\begin{subfigure}{.33\textwidth}
  \centering
  \hspace*{40mm}\includegraphics[width=\linewidth]{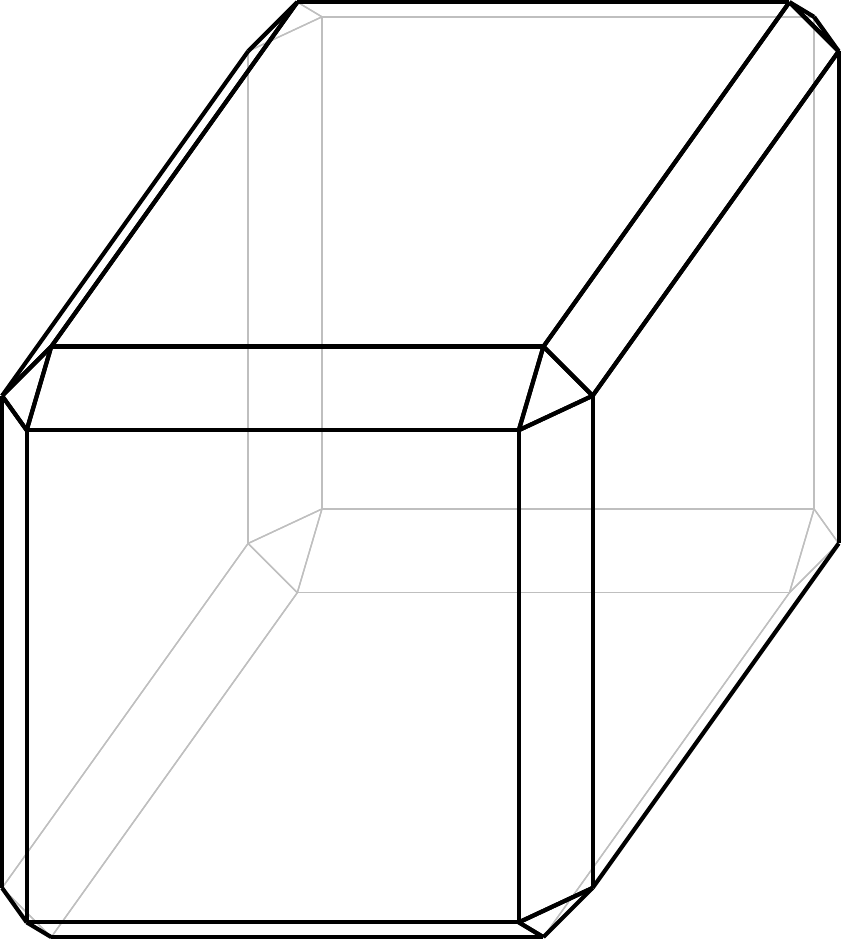}
  \caption{\footnotesize{$K+ t L$, $t>0$}}\label{fig2}
  \end{subfigure}
  \caption{Example \ref{mv}: Mixed Volume}\label{figmain}
\end{figure}

\end{example}

\begin{definition}[Support function]
For $K \in \mathcal{K}^n$, the support function, $h_K:\R^n \rightarrow \R$, is defined by $$h_K(\bm{u})=\sup_{\bm{x}\in K} \bm{x}^T\bm{u}.$$
\end{definition}

\begin{definition}
For a full-dimensional polytope, $P \subseteq  \R^n$, we introduce \begin{align*}\mathcal{U}(P) \colonequals \;\;&\text{the set of all outer facet normals, $\bm{u}$, such that the length}\\ &\text{of $\bm{u}$ is the ($n-1$)-dimensional volume of the respective facet}.\end{align*}
\end{definition}

\begin{theorem}
\label{mixvolT}
Let $P \subseteq \R^n$ be a full-dimensional polytope and let $K \in \mathcal{K}^n$. Then the \emph{mixed volume} of $n-1$ copies of $P$ and one copy of $K$ is given by

$$V(P,P,\dots,P,K) = \frac{1}{n} \sum_{\bm{u} \in \mathcal{U}(P)} h_{K}(\bm{u}).$$
\end{theorem}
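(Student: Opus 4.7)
The plan is to derive the identity by computing $\Vol(P + tK)$ in two different ways and matching coefficients of $t$. On the one hand, Theorem \ref{thmv} with $m = 2$, $K_1 = P$, $K_2 = K$, $t_1 = 1$, $t_2 = t$, combined with the fact that $V$ is invariant under permutation of its arguments, gives the polynomial expansion
$$\Vol(P + tK) = \sum_{k=0}^{n} \binom{n}{k} t^k V(\underbrace{P,\dots,P}_{n-k},\underbrace{K,\dots,K}_{k}),$$
so the coefficient of $t^1$ on the right-hand side is exactly $n \cdot V(P,P,\dots,P,K)$. The goal then reduces to computing the same coefficient geometrically.

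For the geometric side, I would decompose $P + tK$ (for small $t \geq 0$) as the disjoint union of $P$ together with one ``slab'' associated to each facet of $P$, plus remainder pieces attached to the faces of $P$ of dimension at most $n-2$. For a facet $F$ of $P$ with unit outward normal $\bm{\nu}_F$, the associated slab is the set of points of $(P+tK) \setminus P$ whose nearest facet of $P$ is $F$; its $n$-dimensional volume equals $(n-1)\text{-vol}(F) \cdot t \cdot h_K(\bm{\nu}_F)$ up to an $O(t^2)$ error, because the slab projects orthogonally onto $F$ and its outward extent in the normal direction is $t \cdot h_K(\bm{\nu}_F)$ by definition of the support function. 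The contribution of a face of dimension $d < n-1$ is contained in a neighbourhood of diameter $O(t)$ around that face, hence has $n$-volume $O(t^{n-d}) = O(t^2)$. Consequently,
$$\Vol(P + tK) = \Vol(P) + t \sum_{F} (n-1)\text{-vol}(F) \cdot h_K(\bm{\nu}_F) + O(t^2).$$

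Since $\bm{u} = (n-1)\text{-vol}(F) \cdot \bm{\nu}_F$ is precisely the element of $\mathcal{U}(P)$ corresponding to $F$, and the support function $h_K$ is positively homogeneous of degree $1$, we have $(n-1)\text{-vol}(F) \cdot h_K(\bm{\nu}_F) = h_K(\bm{u})$. Matching the coefficient of $t$ in the two expressions for $\Vol(P+tK)$ yields
$$n \cdot V(P,P,\dots,P,K) = \sum_{\bm{u} \in \mathcal{U}(P)} h_K(\bm{u}),$$
which is the claimed formula.

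The main technical obstacle is making the geometric decomposition rigorous: one must argue that for sufficiently small $t$, the facet-slabs are essentially disjoint and have the advertised volume, while the pieces glued to lower-dimensional faces contribute only $O(t^2)$. A clean way to do this is to use the normal fan of $P$ to partition $\R^n$ into cones indexed by the faces of $P$, observe that the slab associated to $F$ is $F + t(K \cap N_F)$ where $N_F$ is the outward normal cone of $F$, and bound the measure of the remaining pieces by comparison with a tubular neighbourhood of the $(n-2)$-skeleton of $\partial P$.
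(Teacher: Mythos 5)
Your argument is essentially correct, but it is a genuinely different route from the paper's: the paper does not prove Theorem~\ref{mixvolT} at all, it simply cites Equation (5.34) of Schneider and explains how the notation $\mathcal{U}(P)$ encodes the mixed-area measure $S_{n-1}(P,\cdot)$. What you propose is the standard self-contained ``first variation'' proof: extract the coefficient of $t$ in $\Vol(P+tK)$ once algebraically from Theorem~\ref{thmv} (using permutation invariance to collect the $\binom{n}{1}=n$ equal terms) and once geometrically from the facet-slab decomposition. This buys a proof readable without the machinery of mixed-area measures, at the cost of the $O(t^2)$ bookkeeping; the citation route buys brevity and full generality (Schneider's formula holds with $P$ replaced by an arbitrary convex body, with a measure in place of the finite sum).

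Two points need repair. First, your geometric decomposition tacitly assumes $P\subseteq P+tK$ and $h_K(\bm{\nu}_F)\ge 0$, i.e.\ $\bm{0}\in K$; for general $K$ (e.g.\ a single point far from the origin) the ``slabs'' do not exist. This is harmless but should be said: both sides of the identity are invariant under translating $K$ by a vector $\bm{c}$ --- the left side by translation invariance of mixed volumes, the right side because $h_{K+\bm{c}}(\bm{u})=h_K(\bm{u})+\bm{c}^T\bm{u}$ and $\sum_{\bm{u}\in\mathcal{U}(P)}\bm{u}=\bm{0}$ (Minkowski's relation) --- so one may assume $\bm{0}\in K$. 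Second, your closing suggestion that the slab over a facet $F$ is $F+t(K\cap N_F)$ is false for general $K$: $N_F$ is the ray $\R_{+}\bm{\nu}_F$, and $K\cap N_F$ can be $\{\bm{0}\}$ even when $h_K(\bm{\nu}_F)>0$ (take $K$ a segment transversal to $\bm{\nu}_F$), so that formula would give zero volume; it is the correct decomposition only in the Steiner-formula case $K=B^n$. The statement you actually need, and which your main paragraph already contains, is that $(P+tK)\cap(\mathrm{relint}(F)+N_F)$ agrees with the prism $\{\bm{x}+s\bm{\nu}_F:\bm{x}\in F,\ 0\le s\le t\,h_K(\bm{\nu}_F)\}$ up to a set of volume $O(t^2)$: the inclusion $\subseteq$ follows by evaluating $\bm{\nu}_F$ on $\bm{p}+t\bm{k}$, and the reverse inclusion holds over the points of $F$ at distance $\gtrsim t$ from the relative boundary of $F$. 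With those two fixes the proof is complete and correct.
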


The proof follows directly from the literature, see Equation (5.34) of \cite{Schneider2}.  However, note that \cite{Schneider2} doesn't employ the notation $\mathcal{U}(P)$, but rather uses the more general notation for the so-called \emph{mixed-area measure}, $S_{n-1}(P,\cdot)$, which is introduced not only for full-dimensional polytopes but also for arbitrary elements of $\mathcal{K}^n$.   In the special case of
full-dimensional polytopes, $S_{n-1}(P,\cdot)$ is a measure whose support is the set of outer facet normals of $P$, where the measure of an outer facet normal equates to the $(n-1)$-dimensional volume of the respective facet.  To make clear how the notation $S_{n-1}(P,\cdot)$ used in \cite{Schneider2} is related to our notation $\mathcal{U}(P)$, we observe that $\mathcal{U}(P)$ provides the same information as $S_{n-1}(P,\cdot)$, and one has the equality $$\int h_K(\bm{u}) S_{n-1}(P, d\bm{u}) = \sum_{\bm{u} \in \mathcal{U}(P)} h_K(\bm{u}).$$

We derive a formula for $\mathcal{U}(T)$ for an arbitrary tetrahedron $T$.  Consider a $4\times3$ matrix, $\bm{M}$, of the following form:
\begin{equation*}\bm{M}:= \left[\begin{array}{ccc}\bm{\alpha} & \bm{\beta} & \bm{\gamma}   \\1&1&1\end{array} \right] = \left [\begin{array} {ccc}
\alpha_1 & \beta_1 & \gamma_1 \\
\alpha_2 & \beta_2 & \gamma_2\\
\alpha_3 & \beta_3 & \gamma_3 \\
1 & 1 & 1
  \end{array} \right].\end{equation*}
Let $D_i(\bm{\alpha}, \bm{\beta}, \bm{\gamma})$ be the determinant of the $3\times3$ matrix obtained by deleting row $[\alpha_i, \beta_i, \gamma_i]$ from $\bm{M}$.

\begin{lemma}
\label{UT}
Let $T$ be a tetrahedron in $\R^3$, with vertices $\bm{\alpha}$, $\bm{\beta}$, $\bm{\gamma}$, and $\bm{\delta}$, and without loss of generality, label the vertices such that
$$ \det\left[\begin{array}{cccc}\bm{\alpha} & \bm{\beta} & \bm{\gamma} & \bm{\delta}  \\1&1&1&1\end{array}\right] > 0.$$  Then
$${\scriptsize \mathcal{U}(T) = \frac{1}{2}\left \{ \left [\begin{array} {cc}
D_1(\bm{\alpha}, \bm{\beta}, \bm{\gamma}) \\
-D_2(\bm{\alpha}, \bm{\beta}, \bm{\gamma})\\
D_3(\bm{\alpha}, \bm{\beta}, \bm{\gamma})
  \end{array} \right] , \left [\begin{array} {cc}
-D_1(\bm{\delta}, \bm{\alpha}, \bm{\beta}) \\
D_2(\bm{\delta}, \bm{\alpha}, \bm{\beta})\\
-D_3(\bm{\delta}, \bm{\alpha}, \bm{\beta})
  \end{array} \right], \left [\begin{array} {cc}
D_1(\bm{\gamma}, \bm{\delta}, \bm{\alpha}) \\
-D_2(\bm{\gamma}, \bm{\delta}, \bm{\alpha})\\
D_3(\bm{\gamma}, \bm{\delta}, \bm{\alpha})
  \end{array} \right], \left [\begin{array} {cc}
-D_1(\bm{\beta}, \bm{\gamma}, \bm{\delta}) \\
D_2(\bm{\beta}, \bm{\gamma}, \bm{\delta})\\
-D_3(\bm{\beta}, \bm{\gamma}, \bm{\delta})
  \end{array} \right] \right\}}.$$

 \end{lemma}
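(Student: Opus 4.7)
The plan is to compute each of the four outward facet normals of $T$ one at a time as a cross product of two edges, re-express the result via the determinants $D_i$, and then fix the outward orientation using the positive orientation assumption on the vertices of $T$.

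The first step is a purely algebraic identity: for arbitrary $\bm{p},\bm{q},\bm{r}\in\R^3$,
$$(\bm{q}-\bm{p}) \times (\bm{r}-\bm{p}) = \bigl(D_1(\bm{p},\bm{q},\bm{r}),\; -D_2(\bm{p},\bm{q},\bm{r}),\; D_3(\bm{p},\bm{q},\bm{r})\bigr)^T.$$
This follows by expanding each $D_i$ along its row of $1$'s and comparing component-wise with the standard cross-product formula. Since the magnitude of this cross product is twice the area of the triangle $\conv\{\bm{p},\bm{q},\bm{r}\}$ and its direction is normal to $\mathrm{aff}\{\bm{p},\bm{q},\bm{r}\}$, multiplying by $\tfrac12$ produces a vector of exactly the length required for membership in $\mathcal{U}(\cdot)$, up to an overall outward/inward sign.

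The second step is to list the four triangular facets of $T$ using the vertex triples that appear in the lemma: $(\bm{\alpha},\bm{\beta},\bm{\gamma})$, $(\bm{\delta},\bm{\alpha},\bm{\beta})$, $(\bm{\gamma},\bm{\delta},\bm{\alpha})$, $(\bm{\beta},\bm{\gamma},\bm{\delta})$, opposite $\bm{\delta},\bm{\gamma},\bm{\beta},\bm{\alpha}$, respectively. The third and most delicate step is the outward sign choice. For a facet with ordered vertices $(\bm{p},\bm{q},\bm{r})$ opposite $\bm{s}$, the cross product $(\bm{q}-\bm{p})\times(\bm{r}-\bm{p})$ points outward if and only if its scalar product with $\bm{s}-\bm{p}$ is negative, and this triple product equals, after column operations subtracting $\bm{p}$, a $4\times 4$ determinant of the same shape as the one in the hypothesis, with columns permuted from $(\bm{\alpha},\bm{\beta},\bm{\gamma},\bm{\delta})$ to $(\bm{p},\bm{q},\bm{r},\bm{s})$ and with a single additional row swap needed to move the row of $1$'s. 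For the four triples above the required column permutation is a cyclic shift, so the signs alternate between consecutive facets, producing the $(+,-,+)$ pattern on the facets opposite $\bm{\delta}$ and $\bm{\beta}$ and the $(-,+,-)$ pattern on the facets opposite $\bm{\gamma}$ and $\bm{\alpha}$; this matches the stated formula.

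The main obstacle is therefore not conceptual but is the careful tracking of permutation signs across the four facets; once this bookkeeping is completed consistently, collecting the four scaled normals into a set gives precisely the formula for $\mathcal{U}(T)$ in the lemma.
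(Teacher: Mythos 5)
Your proposal is correct and follows essentially the same route as the paper: the cross product $(\bm{q}-\bm{p})\times(\bm{r}-\bm{p})$ is exactly the vector $(D_1,-D_2,D_3)^T$ that the paper extracts by cofactor expansion of the plane equation, the length argument via the parallelogram is identical, and the outward-orientation test via the sign of the $4\times4$ determinant is the same. The only difference is cosmetic: you make explicit the cyclic-permutation parity bookkeeping for all four facets where the paper appeals to ``symmetric arguments,'' and your sign pattern checks out.
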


\begin{proof}

Given that $T$ is a tetrahedron, $\mathcal{U}(T)$ will consist of four vectors, one corresponding to each facet.  Here we determine the vector corresponding to the facet, $\text{conv}(\bm{\alpha}, \bm{\beta}, \bm{\gamma})$.  The remainder of the result then follows by symmetry.

If we assume $\bm{\alpha}=[\alpha_1,\alpha_2,\alpha_3]^T$, $\bm{\beta}=[\beta_1,\beta_2,\beta_3]^T$, $\bm{\gamma}=[\gamma_1,\gamma_2,\gamma_3]^T$, and $\bm{\delta}=[\delta_1,\delta_2,\delta_3]^T$, then the plane, $\text{aff}(\bm{\alpha}, \bm{\beta}, \bm{\gamma})$ can be given by the following equation:
$$\left |\begin{array}{cccc} x_1 & \alpha_1 & \beta_1 & \gamma_1 \\ x_2 & \alpha_2 & \beta_2 & \gamma_2 \\x_3 & \alpha_3 & \beta_3 & \gamma_3 \\ 1&1&1&1\end{array}\right | = 0.$$
Expanding with respect to the first column we obtain:
$$x_1 \left |\begin{array}{ccc} \alpha_2 & \beta_2 & \gamma_2 \\ \alpha_3 & \beta_3 & \gamma_3 \\1&1&1\end{array}\right | - x_2 \left |\begin{array}{ccc} \alpha_1 & \beta_1 & \gamma_1 \\ \alpha_3 & \beta_3 & \gamma_3 \\1&1&1\end{array}\right |+ x_3 \left |\begin{array}{ccc} \alpha_1 & \beta_1 & \gamma_1 \\ \alpha_2 & \beta_2 & \gamma_2 \\1&1&1\end{array}\right | = \left |\begin{array}{ccc} \alpha_1 & \beta_1 & \gamma_1 \\ \alpha_2 & \beta_2 & \gamma_2 \\\alpha_3 & \beta_3 & \gamma_3\end{array}\right |. $$
Therefore, the vector $\bm{u}=[u_1,u_2,u_3]$, with:
\begin{align*}u_1\colonequals \left |\begin{array}{ccc} \alpha_2 & \beta_2 & \gamma_2 \\ \alpha_3 & \beta_3 & \gamma_3 \\1&1&1\end{array}\right |, \;\;
u_2 \colonequals -\left |\begin{array}{ccc} \alpha_1 & \beta_1 & \gamma_1 \\ \alpha_3 & \beta_3 & \gamma_3 \\1&1&1\end{array}\right |,\;\;
u_3 \colonequals \left |\begin{array}{ccc} \alpha_1 & \beta_1 & \gamma_1 \\ \alpha_2 & \beta_2 & \gamma_2 \\1&1&1\end{array}\right |,
\end{align*}
 is a normal vector of the plane, $\text{aff}(\bm{\alpha}, \bm{\beta}, \bm{\gamma})$.  Furthermore, the length of this vector is the area of the parallelogram defined by the vectors $\bm{\beta}-\bm{\alpha}$ and $\bm{\gamma}-\bm{\alpha}$, this parallelogram being twice the area of the facet (triangle). Therefore either $\frac{1}{2}\bm{u}$ or $-\frac{1}{2}\bm{u}$ is in $\mathcal{U}(T)$.  To determine which, we simply consider the remaining vertex $\bm{\delta}$. If
 $$\left |\begin{array}{cccc} \delta_1 & \alpha_1 & \beta_1 & \gamma_1 \\ \delta_2 & \alpha_2 & \beta_2 & \gamma_2 \\\delta_3 & \alpha_3 & \beta_3 & \gamma_3 \\ 1&1&1&1\end{array}\right | < 0,$$
 then $\frac{1}{2}\bm{u} \in \mathcal{U}(T)$, otherwise $-\frac{1}{2}\bm{u} \in \mathcal{U}(T)$.  However, note that
 $$\det\left[\begin{array}{cccc}\bm{\delta} & \bm{\alpha} & \bm{\beta} & \bm{\gamma} \\ 1&1&1&1\end{array} \right] = -\det\left[\begin{array}{cccc}\bm{\alpha} & \bm{\beta} & \bm{\gamma} & \bm{\delta}  \\1&1&1&1\end{array} \right]. $$
 Therefore, we can label the vertices such that
 $$ \det\left[\begin{array}{cccc}\bm{\alpha} & \bm{\beta} & \bm{\gamma} & \bm{\delta}  \\1&1&1&1\end{array}\right] > 0,$$
 and we have $\frac{1}{2}\bm{u} \in \mathcal{U}(T)$ where, using our notation, $\bm{u} = \left [\begin{array} {cc}
D_1(\bm{\alpha}, \bm{\beta}, \bm{\gamma}) \\
-D_2(\bm{\alpha}, \bm{\beta}, \bm{\gamma})\\
D_3(\bm{\alpha}, \bm{\beta}, \bm{\gamma})
  \end{array} \right]$.

  Symmetric arguments can be made to obtain each of the other elements of $\mathcal{U}(T)$.

\end{proof}

\section{Alternative proof of Theorem~\ref{TheoremPH}.} \label{proof}

\paragraph{Note that throughout this proof, we assume \eqref{Omega}}

For convenience, we will also assume throughout that $a_1, a_2, a_3 >0$.  By an identical continuity argument to the one made in \cite{SpeakmanLee2015} (page 7), we are then immediately able to claim that the resulting formula also holds for any $a_i=0$.  In fact, the only time this impacts our proof is in the case $a_3=0$, which in turn, by \eqref{Omega}, implies $a_2=0$ and $a_1=0$.  Therefore, the volume calculation becomes much simpler and we could easily construct a separate proof for this case.  However, because of the continuity argument, this is not necessary.

To compute the four dimensional volume of $\Po^3_H$, we first note that the extreme points of $\Po^3_H$, lie in two parallel hyperplanes.  Four points lie in the hyperplane $x_3=a_3$ and four points lie in the hyperplane $x_3=b_3$.  See Figure~\ref{cube} for an illustration adapted from \cite{SpeakmanLee2015}. In this way we can think of $\Po^3_H$ as the convex hull of two (3 dimensional) tetrahedra sat in $\R^4$, we define these as follows:

\begin{align*}Q&:=\text{conv}\left\{
\left [\begin{array} {c}
b_1b_2a_3 \\
b_1\\
b_2
  \end{array} \right], \;\; \left [\begin{array} {c}
a_1a_2a_3\\
a_1\\
a_2
  \end{array} \right], \;\; \left [\begin{array} {c}
b_1a_2a_3\\
b_1\\
a_2
  \end{array} \right],\; \;\left [\begin{array} {c}
a_1b_2a_3 \\
a_1\\
b_2
  \end{array} \right]
 \right\}, \\[12pt]
R&:=\text{conv}\left\{
\left [\begin{array} {c}
b_1b_2b_3 \\
b_1\\
b_2
  \end{array} \right], \;\; \left [\begin{array} {c}
a_1a_2b_3\\
a_1\\
a_2
  \end{array} \right], \;\; \left [\begin{array} {c}
b_1a_2b_3\\
b_1\\
a_2
  \end{array} \right],\; \;\left [\begin{array} {c}
a_1b_2b_3 \\
a_1\\
b_2
  \end{array} \right]
  \right\}.\end{align*}

The 4-dimensional volume of $\Po_H^3$ can therefore be calculated via an integral of the 3-dimensional volumes of parallel cross-sections of $\Po_H^3$ as $x_3$ varies from $a_3$ to $b_3$:
\begin{equation*}\Vol(\Po^3_H)=  \int_{a_3}^{b_3} \Vol \left ( \frac{b_3-t}{b_3-a_3}Q + \frac{t-a_3}{b_3-a_3}R \right )dt. \end{equation*} It is clear that when $t=a_3$ we have the volume of $Q$ and when $t=b_3$ we have the volume of $R$.

\allowdisplaybreaks[1]
Using Theorem~\ref{thmv} and Theorem~\ref{mv2}, we are able to write the volume under the integral (of the cross section of $\Po^3_H$) in terms of mixed volumes (of $Q$ and $R$), and obtain:
\begin{align*}\Vol(\Po^3_H)&=  \int_{a_3}^{b_3} \Vol \left ( \frac{b_3-t}{b_3-a_3}Q + \frac{t-a_3}{b_3-a_3}R \right )dt \\[5pt]
 &=(b_3-a_3)^{-3} \int_{a_3}^{b_3} \Vol\left((b_3-t)Q + (t-a_3)R\right) dt \\[5pt]
 &=(b_3-a_3)^{-3} \int_{a_3}^{b_3}\left ( (b_3-t)^3 V(Q,Q,Q) + 3(b_3-t)^2(t-a_3) V(Q,Q,R)\right.\\
 &\qquad \qquad \quad \;\; \left.+ 3(b_3-t)(t-a_3)^2V(Q,R,R) + (t-a_3)^3V(R,R,R)\right)dt \\[5pt]
 &=(b_3-a_3)^{-3} \int_{a_3}^{b_3}\left ( (b_3-t)^3 \Vol(Q) + 3(b_3-t)^2(t-a_3) V(Q,Q,R)\right.\\
 &\qquad \qquad \qquad \;\; \left.+ 3(b_3-t)(t-a_3)^2V(Q,R,R) + (t-a_3)^3\Vol(R)\right)dt, \end{align*} where $V(Q,R,R)$ is the mixed volume of one copy of $Q$ and two copies of $R$.  Likewise $V(Q,Q,R)$ is the mixed volume of one copy of $R$ and two copies of $Q$.

\begin{figure}[h]
  \centering
  \includegraphics[width=0.5\textwidth]{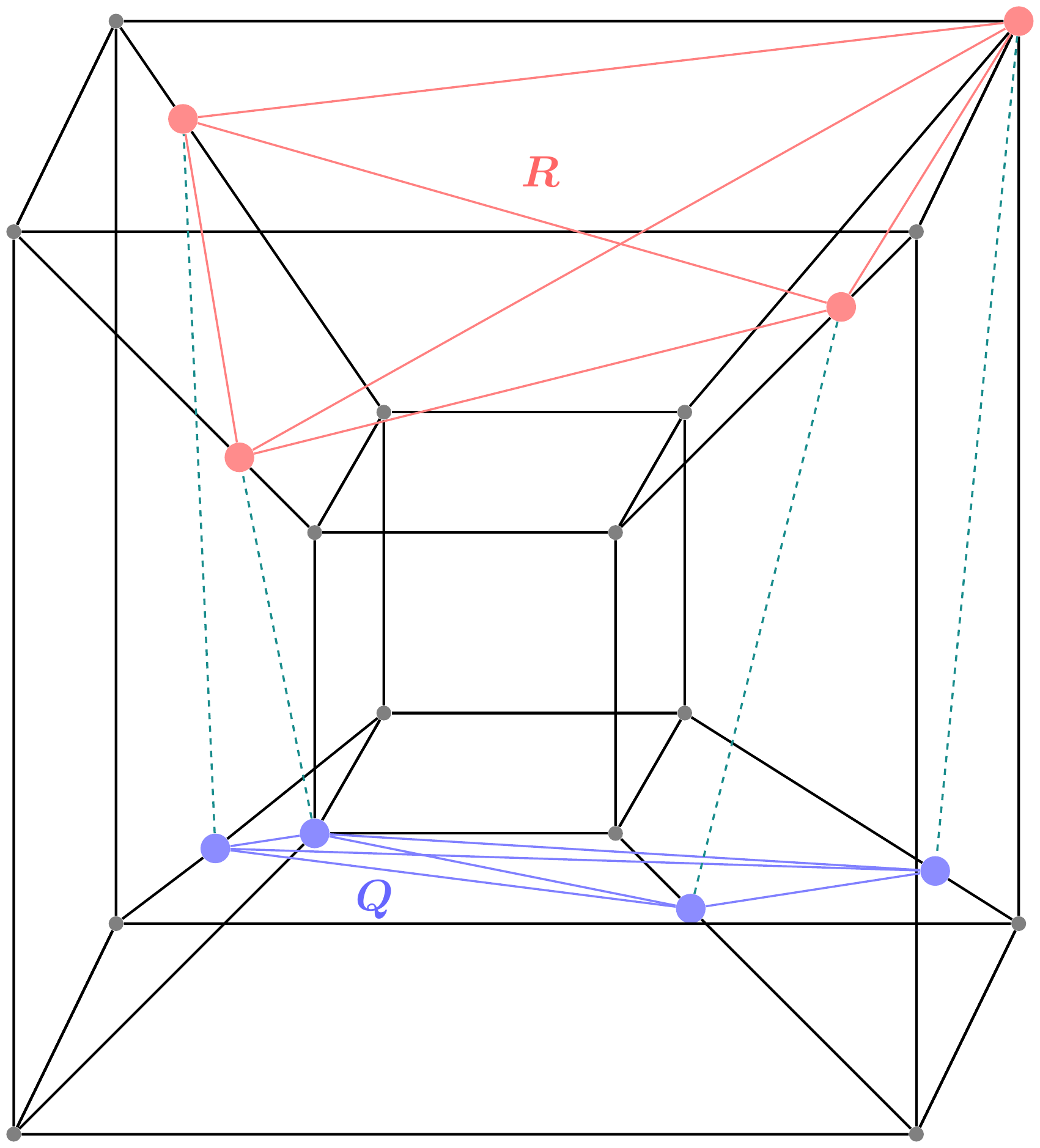}
  \caption{Schlegel diagram of $\Po^3_H$ embedded into the 4-dimensional hypercube}\label{cube}
\end{figure}

The volume of $Q$ and of $R$ can be computed via a simple determinant calculation.  Therefore, we state the following two lemmas without proof.
\begin{lemma}
\label{simplexP}
$$\Vol(Q)=\frac{a_3(b_1-a_1)^2(b_2-a_2)^2}{6}.  $$
\end{lemma}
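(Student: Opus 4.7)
The plan is to exploit that $Q$ is a $3$-dimensional simplex: it has exactly four extreme points in $\R^3$, so its volume is given by the standard formula $\Vol(Q)=\tfrac{1}{6}\bigl|\det[\bm v_2-\bm v_1,\bm v_3-\bm v_1,\bm v_4-\bm v_1]\bigr|$ applied to any ordering of its vertices.

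I would choose $\bm v_1:=(a_1a_2a_3,a_1,a_2)^T$ as the base vertex, since subtracting it from the other three vertices produces many zero entries in the second and third coordinates. Concretely, the three edge vectors become
\begin{align*}
\bm v_2-\bm v_1 &= \bigl(a_3(b_1b_2-a_1a_2),\; b_1-a_1,\; b_2-a_2\bigr)^T,\\
\bm v_3-\bm v_1 &= \bigl(a_2a_3(b_1-a_1),\; b_1-a_1,\; 0\bigr)^T,\\
\bm v_4-\bm v_1 &= \bigl(a_1a_3(b_2-a_2),\; 0,\; b_2-a_2\bigr)^T.
\end{align*}

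Next I would compute the determinant of the $3\times 3$ matrix whose columns are these three vectors. Cofactor expansion along the second row is clean because only two of its entries are nonzero, and both equal $b_1-a_1$. The remaining $2\times 2$ minors reduce to expressions in which the combination $b_1b_2-a_1a_2$ collapses via the identity $b_1b_2-a_1a_2=b_2(b_1-a_1)+a_1(b_2-a_2)$ (or, equivalently, $b_1(b_2-a_2)+a_2(b_1-a_1)$). After cancellation the determinant simplifies to $a_3(b_1-a_1)^2(b_2-a_2)^2$, and dividing by $6$ gives the claimed formula.

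The only real obstacle is the algebraic simplification in the cofactor expansion, and this is defused by the zero entry $\bm v_3-\bm v_1$ has in the third coordinate and the zero entry $\bm v_4-\bm v_1$ has in the second coordinate, together with the telescoping identity for $b_1b_2-a_1a_2$ noted above. No geometric input beyond the volume-of-a-simplex formula is needed.
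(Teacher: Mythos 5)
Your proposal is correct and is exactly the argument the paper intends: the authors state Lemma~\ref{simplexP} without proof, remarking only that the volume ``can be computed via a simple determinant calculation,'' and your cofactor expansion (which indeed yields $\det = a_3(b_1-a_1)^2(b_2-a_2)^2$ after the telescoping identity for $b_1b_2-a_1a_2$) supplies precisely that calculation.
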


\begin{lemma}
\label{simplexQ}
$$\Vol(R)=\frac{b_3(b_1-a_1)^2(b_2-a_2)^2}{6}.  $$
\end{lemma}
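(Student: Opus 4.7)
The plan is to compute $\Vol(R)$ directly from the standard tetrahedron volume formula: since $R$ is a tetrahedron in $\R^3$ with four explicit vertices, its volume equals $\tfrac{1}{6}$ times the absolute value of the determinant of the $3\times 3$ matrix whose columns are the edge vectors from a chosen base vertex to the other three vertices. I would take the base vertex to be $v_1 := (a_1 a_2 b_3,\, a_1,\, a_2)^T$.

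The calculation becomes particularly clean because each vertex has $x_1 \in \{a_1,b_1\}$ and $x_2 \in \{a_2,b_2\}$, so two of the three edge vectors from $v_1$ have a coordinate equal to zero (the edge to $(b_1 a_2 b_3,b_1,a_2)^T$ has last entry $0$, and the edge to $(a_1 b_2 b_3,a_1,b_2)^T$ has middle entry $0$). After factoring $b_3$ from the first row and $(b_1-a_1)$, $(b_2-a_2)$ from the remaining two rows, the leftover $3\times 3$ determinant collapses via a short cofactor expansion to $(b_1-a_1)(b_2-a_2)$, giving $\Vol(R) = \tfrac{1}{6}\,b_3(b_1-a_1)^2(b_2-a_2)^2$.

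A conceptual shortcut (which also explains the close parallel with Lemma \ref{simplexP}) is to apply the linear map $(y,x_1,x_2)\mapsto(y/b_3,x_1,x_2)$, which scales $3$-dimensional volume by $1/b_3$ and sends $R$ to the tetrahedron $\tilde R := \conv\{(x_1x_2,x_1,x_2) : x_i\in\{a_i,b_i\}\}$; the analogous rescaling by $a_3$ sends $Q$ to the same $\tilde R$. Hence $\Vol(R) = (b_3/a_3)\Vol(Q)$, and the formula follows immediately from Lemma \ref{simplexP}. I do not anticipate any real obstacle: the entire argument is an elementary $3\times 3$ determinant, and the structural symmetry between $Q$ and $R$ (replace $a_3$ by $b_3$ in the first coordinate of every vertex) ensures the proof of Lemma \ref{simplexP} transfers verbatim.
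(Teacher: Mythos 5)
Your proposal is correct: the paper deliberately omits the proof, remarking only that $\Vol(Q)$ and $\Vol(R)$ follow from ``a simple determinant calculation,'' and your direct computation of $\tfrac{1}{6}\lvert\det\rvert$ of the edge vectors from $(a_1a_2b_3,a_1,a_2)^T$ is exactly that calculation, carried out correctly to yield $b_3(b_1-a_1)^2(b_2-a_2)^2/6$. The rescaling observation $\Vol(R)=(b_3/a_3)\Vol(Q)$ is a nice bonus (valid under the running assumption $a_3>0$), but the core argument matches the paper's intended one.
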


Now we know $\Vol(Q)$ and $\Vol(R)$ in terms of the parameters, $a_1$, $a_2$, $a_3$, $b_1$, $b_2$, $b_3$, all that is required is to find the relevant mixed volumes and complete the integration.

Directly from Lemma \ref{UT} we obtain:

{\small
\begin{align*}
\mathcal{U}(Q) = &\frac{(b_1-a_1)(b_2-a_2)}{2}\;\left\{ \left [\begin{array} {cc}
1 \\
-a_2a_3\\
-b_1a_3
  \end{array} \right] ,\; \left [\begin{array} {cc}
1 \\
-b_2a_3\\
-a_1a_3
  \end{array} \right], \;
   \left [\begin{array} {cc}
-1 \\
b_2a_3\\
b_1a_3
  \end{array} \right],\; \left [\begin{array} {cc}
-1 \\
a_2a_3\\
a_1a_3
  \end{array} \right] \right\}, \\[12pt]
\mathcal{U}(R) = &\frac{(b_1-a_1)(b_2-a_2)}{2}\;\left\{ \left [\begin{array} {cc}
1 \\
-a_2b_3\\
-b_1b_3
  \end{array} \right] ,\; \left [\begin{array} {cc}
1 \\
-b_2b_3\\
-a_1b_3
  \end{array} \right],  \;\left [\begin{array} {cc}
-1 \\
b_2b_3\\
b_1b_3
  \end{array} \right], \;\left [\begin{array} {cc}
-1 \\
a_2b_3\\
a_1b_3
  \end{array} \right] \right\}.
  \end{align*}}
Then, using Theorem~\ref{mixvolT} we have \begin{equation*}V(Q,Q,R) = \frac{1}{3} \sum_{\bm{u} \in \mathcal{U}(Q)} h_R(\bm{u}) \end{equation*} and  \begin{equation*}V(Q,R,R) = \frac{1}{3} \sum_{\bm{u} \in \mathcal{U}(R)} h_Q(\bm{u}), \end{equation*} where $h_R(\bm{u})=\max_{\bm{x}\in R} \bm{x}^T\bm{u}$ is the support function of $R$, and $h_Q(\bm{u})=\max_{\bm{x}\in Q} \bm{x}^T\bm{u}$ is the support function of $Q$.  Because $Q$ and $R$ are tetrahedra, it is simple to compute $h_R(\bm{u})$ or $h_Q(\bm{u})$ for some vector $\bm{u}$.  For each extreme point, $\bm{x}$, we need only to compute $\bm{x}^T\bm{u}$ and pick the maximum of these four values. The following lemmas (with proofs stated in the appendix) are used to obtain the appropriate maximum values.

\begin{lemma}
\label{lem1}
\begin{align*}
 z_1&\colonequals \max\left\{b_1b_2b_3-b_1a_2a_3-b_1b_2a_3, \; a_1a_2b_3-a_1a_2a_3-b_1a_2a_3,\right.\\
   &\qquad  \qquad \qquad \qquad\qquad \qquad\;\; \left.b_1a_2b_3-2b_1a_2a_3, \; a_1b_2b_3-b_1b_2a_3-a_1a_2a_3\right\}\\
    &= b_1b_2b_3-b_1a_2a_3-b_1b_2a_3.
   \end{align*}
\end{lemma}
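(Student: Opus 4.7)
The plan is to verify directly that the first expression dominates the other three by pairwise subtraction and factoring. Write
\[
A := b_1b_2b_3-b_1a_2a_3-b_1b_2a_3, \quad
B := a_1a_2b_3-a_1a_2a_3-b_1a_2a_3,
\]
\[
C := b_1a_2b_3-2b_1a_2a_3, \quad
D := a_1b_2b_3-b_1b_2a_3-a_1a_2a_3.
\]
I would show $A-B$, $A-C$, and $A-D$ are each nonnegative; since \eqref{Omega} only guarantees $0\le a_i<b_i$, I expect each difference to factor cleanly into a product of differences of the form $b_i-a_i$ (times a nonnegative monomial).

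Concretely, I would first cancel the common terms in $A-B$, observing that the $b_1a_2a_3$ contributions disappear, leaving $b_1b_2(b_3-a_3)-a_1a_2(b_3-a_3)=(b_1b_2-a_1a_2)(b_3-a_3)$, which is nonnegative because all bounds are nonnegative and the upper bounds strictly exceed the lower bounds. Next, $A-C$ should telescope to $b_1\bigl((b_2-a_2)(b_3-a_3)\bigr)$ after cancellation of the $b_1a_2a_3$ and $b_1b_2b_3$--related terms. Finally, $A-D$ should simplify similarly to $(b_1-a_1)(b_2b_3-a_2a_3)$.

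Each of the three resulting expressions is a product of quantities that are manifestly nonnegative under \eqref{Omega}, so $A$ is at least as large as each of $B$, $C$, $D$, giving the claim. The step requiring the most care is simply tracking the cancellations; no appeal to the ordering $a_1/b_1\le a_2/b_2\le a_3/b_3$ from \eqref{Omega} appears to be needed, only the coordinate-wise bounds $0\le a_i<b_i$. In that sense there is no real obstacle here: this lemma is essentially a bookkeeping exercise, and the main value of stating it separately is to isolate the identification of the support-function maximum used in computing $h_R(\bm{u})$ at a particular $\bm{u}\in\mathcal{U}(Q)$.
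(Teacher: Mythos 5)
Your proposal is correct and matches the paper's proof exactly: the paper likewise subtracts each of the other three candidates from $b_1b_2b_3-b_1a_2a_3-b_1b_2a_3$ and factors the differences as $(b_1b_2-a_1a_2)(b_3-a_3)$, $b_1(b_2-a_2)(b_3-a_3)$, and $(b_2b_3-a_2a_3)(b_1-a_1)$, all nonnegative from $0\le a_i<b_i$ alone. Your observation that the ordering part of \eqref{Omega} is not needed here is also consistent with the paper, which invokes Lemma \ref{lem10.1} only for other lemmas such as Lemma \ref{lem3}.
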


\begin{lemma}
\label{lem2}
\begin{align*}
z_2&\colonequals\max\left\{b_1b_2b_3-a_1b_2a_3-b_1b_2a_3, \; a_1a_2b_3-a_1b_2a_3-a_1a_2a_3,\right.\\
    &\qquad  \qquad \qquad \qquad\qquad\qquad\;\; \left.b_1a_2b_3-b_1b_2a_3-a_1a_2a_3, \; a_1b_2b_3-2a_1b_2a_3\right\}\\
   &= b_1b_2b_3-a_1b_2a_3-b_1b_2a_3.
   \end{align*}
\end{lemma}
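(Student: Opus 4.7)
The plan is to verify directly that the first of the four candidate values, $T_1 := b_1 b_2 b_3 - a_1 b_2 a_3 - b_1 b_2 a_3$, dominates each of the other three. Label the remaining candidates as $T_2 := a_1 a_2 b_3 - a_1 b_2 a_3 - a_1 a_2 a_3$, $T_3 := b_1 a_2 b_3 - b_1 b_2 a_3 - a_1 a_2 a_3$, and $T_4 := a_1 b_2 b_3 - 2 a_1 b_2 a_3$. I would show $T_1 - T_j \geq 0$ for $j = 2, 3, 4$ by computing each difference and exhibiting a manifestly nonnegative factorization.

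For $T_1 - T_2$, after cancelling the common $-a_1 b_2 a_3$, the surviving expression regroups as $(b_3 - a_3)(b_1 b_2 - a_1 a_2)$. For $T_1 - T_3$, the common $-b_1 b_2 a_3$ cancels and what remains factors as $(b_2 - a_2)(b_1 b_3 - a_1 a_3)$. For $T_1 - T_4$, the $x_2$-coordinate comes out as a common factor, leaving $b_2\bigl((b_1 - a_1)(b_3 - a_3)\bigr)$. Each product is nonnegative using only the hypothesis $0 \leq a_i < b_i$ from \eqref{Omega}.

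The main obstacle, such as it is, is simply spotting the right pairing of terms in each difference so that the factorization appears; once written down, positivity is immediate, and the ordering portion of \eqref{Omega} on the ratios $a_i/b_i$ is not even required. This parallels the structure of the proof of Lemma~\ref{lem1} and confirms that the maximum is attained at the first argument.
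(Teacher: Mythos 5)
Your proposal is correct and matches the paper's own proof: the paper establishes the same three differences $T_1 - T_j$ with exactly the factorizations you give, namely $(b_1b_2-a_1a_2)(b_3-a_3)$, $(b_1b_3-a_1a_3)(b_2-a_2)$, and $b_2(b_1-a_1)(b_3-a_3)$. Your observation that only $0 \leq a_i < b_i$ is needed (and not the ordering part of \eqref{Omega}) is also consistent with the paper, which invokes Lemma~\ref{lem10.1} only for Lemmas~\ref{lem3} and~\ref{lem8}.
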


\begin{lemma}
\label{lem3}
\begin{align*}
 z_3&\colonequals\max\left\{2b_1b_2a_3-b_1b_2b_3, \; a_1b_2a_3+b_1a_2a_3-a_1a_2b_3,\right.\\
    &\qquad  \qquad \qquad \qquad\quad\;\left.b_1a_2a_3+b_1b_2a_3-b_1a_2b_3, \; a_1b_2a_3+b_1b_2a_3-a_1b_2b_3\right\}  \\
   & = a_1b_2a_3+b_1b_2a_3-a_1b_2b_3.
   \end{align*}
\end{lemma}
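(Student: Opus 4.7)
The plan is to prove $z_3$ equals the fourth entry of the maximum by showing the other three entries are each no larger. Since the candidate maximizer is already identified, the task reduces to three pairwise inequalities. Throughout, I will use the equivalent form \eqref{Omega'} of the standing hypothesis, which gives both $0 \leq a_i < b_i$ and the chain $\frac{a_1}{b_1} \leq \frac{a_2}{b_2} \leq \frac{a_3}{b_3}$; my strategy is to reduce each of the three comparisons to a single, direct application of one of these facts via a clean factorization.

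Concretely, I would subtract each of the first three candidate terms from $a_1b_2a_3 + b_1b_2a_3 - a_1b_2b_3$ and rewrite the resulting trinomial or quadrinomial as a product of two binomials. For the comparison against $2b_1b_2a_3 - b_1b_2b_3$, I expect the difference to collapse to $b_2(b_1 - a_1)(b_3 - a_3)$, which is nonnegative directly from $a_i < b_i$ and $b_2 > 0$. For the comparison against $a_1b_2a_3 + b_1a_2a_3 - a_1a_2b_3$, I expect a factorization of the form $(b_2 - a_2)(b_1a_3 - a_1b_3)$, where the first factor is positive and the second is just $b_1 b_3$ times $\frac{a_3}{b_3} - \frac{a_1}{b_1}$, nonnegative by \eqref{Omega'}. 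For the comparison against $b_1a_2a_3 + b_1b_2a_3 - b_1a_2b_3$, I expect the analogous factorization $(b_3 - a_3)(b_1a_2 - a_1b_2)$, handled the same way using $\frac{a_1}{b_1} \leq \frac{a_2}{b_2}$.

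The main obstacle, such as it is, is purely algebraic bookkeeping: locating each of the three two-factor representations by grouping terms appropriately. Once the factorizations are in hand, the sign of each factor is read off from \eqref{Omega'} in a single line, and the lemma follows immediately. No use of the first inequality of \eqref{Omega} (the one involving $\mathcal{O}_1 \leq \mathcal{O}_2 \leq \mathcal{O}_3$ beyond its equivalence with \eqref{Omega'}) should be required for this particular lemma; only the ratio ordering from \eqref{Omega'} and the bound orderings $a_i < b_i$ enter.
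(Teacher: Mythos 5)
Your proposal is correct and follows essentially the same route as the paper: subtract each of the other three candidates from $a_1b_2a_3+b_1b_2a_3-a_1b_2b_3$ and factor, obtaining exactly the three products $b_2(b_1-a_1)(b_3-a_3)$, $(b_2-a_2)(b_1a_3-a_1b_3)$, and $(b_3-a_3)(b_1a_2-a_1b_2)$ that appear in the paper's proof. The only cosmetic difference is that you read off the signs of $b_1a_3-a_1b_3$ and $b_1a_2-a_1b_2$ from the ratio form $(\Omega')$, whereas the paper cites its Lemma~\ref{lem10.1} (the product form $(\Omega'')$), which is the same fact.
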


\begin{lemma}
\label{lem4}
\begin{align*}
z_4&\colonequals\max\left\{a_1b_2a_3+b_1a_2a_3-b_1b_2b_3, \;2a_1a_2a_3-a_1a_2b_3,\right.\\
   &\qquad  \qquad \qquad \qquad\quad \left.a_1a_2a_3+b_1a_2a_3-b_1a_2b_3, \; a_1a_2a_3+a_1b_2a_3-a_1b_2b_3\right\}\\
   & = 2a_1a_2a_3-a_1a_2b_3.
   \end{align*}
\end{lemma}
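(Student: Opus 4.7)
The plan is to label the four candidates inside the $\max$ as $w_1, w_2, w_3, w_4$ in the order they appear, with $w_2 = 2a_1a_2a_3 - a_1a_2b_3$ being the conjectured maximum $z_4$, and to verify the three inequalities $w_2 \geq w_i$ for $i = 1, 3, 4$ by direct algebraic manipulation. No ordering information from \eqref{Omega} beyond $0 \leq a_i < b_i$ will actually be required, which is a slight convenience.

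Two of these comparisons are immediate. For $w_2 - w_4$, the $a_1 a_2 a_3$ contributions combine to leave $a_1\bigl(a_2 a_3 - a_2 b_3 - b_2 a_3 + b_2 b_3\bigr) = a_1(b_2 - a_2)(b_3 - a_3) \geq 0$, since $a_1 \geq 0$ and $b_i \geq a_i$. A symmetric computation gives $w_2 - w_3 = a_2(b_1 - a_1)(b_3 - a_3) \geq 0$. Both conclusions use only nonnegativity of the $a_i$ and of the gaps $b_i - a_i$.

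The remaining comparison is the main obstacle: the expression
\[
w_2 - w_1 \;=\; 2a_1 a_2 a_3 - a_1 a_2 b_3 - a_1 b_2 a_3 - b_1 a_2 a_3 + b_1 b_2 b_3
\]
does not collapse to a single product of differences. The key step I would take is to split it into two manifestly nonnegative pieces by verifying the identity
\[
w_2 - w_1 \;=\; b_3(b_1 - a_1)(b_2 - a_2) \;+\; (b_3 - a_3)\bigl[a_1(b_2 - a_2) + a_2(b_1 - a_1)\bigr],
\]
which one can check by expanding the right-hand side and collecting monomials; once the identity is in hand, every factor appearing is nonnegative, so $w_2 \geq w_1$. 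Equivalently, after dividing by $b_1 b_2 b_3 > 0$ and substituting $r_i := a_i/b_i \in [0,1)$ (the normalization underlying $(\Omega')$), the same inequality reads $2 r_1 r_2 r_3 - r_1 r_2 - r_1 r_3 - r_2 r_3 + 1 \geq 0$, which rewrites cleanly as $(1-r_1)(1-r_2) + (1-r_3)\bigl(r_1 + r_2 - 2 r_1 r_2\bigr) \geq 0$; in that form nonnegativity is obvious from $r_i < 1$ and $r_1 + r_2 - 2 r_1 r_2 = r_1(1-r_2) + r_2(1-r_1)$. Either presentation finishes the proof.
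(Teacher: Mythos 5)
Your proof is correct and follows essentially the same strategy as the paper's: the comparisons against the third and fourth candidates reduce to the identical single products $a_2(b_1-a_1)(b_3-a_3)$ and $a_1(b_2-a_2)(b_3-a_3)$, and the comparison against the first candidate is handled by splitting the difference into nonnegative pieces. The paper's split is $(b_1-a_1)(b_2b_3-a_2a_3)+a_1(b_2-a_2)(b_3-a_3)$ rather than your $b_3(b_1-a_1)(b_2-a_2)+(b_3-a_3)\bigl[a_1(b_2-a_2)+a_2(b_1-a_1)\bigr]$, but both identities are valid and, like the paper, you need nothing from $(\Omega)$ beyond $0\le a_i<b_i$.
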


\begin{lemma}
\label{lem5}
\begin{align*}
 z_5&\colonequals\max\left\{b_1b_2a_3-b_1a_2b_3-b_1b_2b_3, \; a_1a_2a_3-a_1a_2b_3-b_1a_2b_3,\right.\\
   &\qquad  \qquad \qquad \qquad\qquad\qquad\;\; \left.b_1a_2a_3-2b_1a_2b_3,\; a_1b_2a_3-a_1a_2b_3-b_1b_2b_3\right\}\\
   &  = a_1a_2a_3-a_1a_2b_3-b_1a_2b_3.
   \end{align*}
\end{lemma}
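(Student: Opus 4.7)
The plan is to reduce the lemma to three pairwise comparisons. Let $T_1, T_2, T_3, T_4$ denote the four candidate expressions in the order listed, so the claim is $T_2 \geq T_j$ for $j \in \{1,3,4\}$. For each such $j$, I would compute the difference $T_2 - T_j$ directly, reorganize the resulting four or six monomials, and exhibit a factorization as a product of manifestly nonnegative quantities of the form $(b_i - a_i)$ and $(b_ib_k - a_ia_k)$. Only the box hypothesis $0 \leq a_i < b_i$ is needed here; the ordering assumption \eqref{Omega} will play no role in this particular lemma.

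Concretely, for $T_2 - T_1$ the term $-b_1a_2b_3$ cancels and the surviving expression is $a_1a_2a_3 - a_1a_2b_3 - b_1b_2a_3 + b_1b_2b_3$; grouping the $a_1a_2$ and $b_1b_2$ pieces yields $(b_3 - a_3)(b_1b_2 - a_1a_2) \geq 0$. For $T_2 - T_3$ the constants cancel to leave $a_1a_2a_3 - a_1a_2b_3 + b_1a_2b_3 - b_1a_2a_3$, which factors as $a_2(b_1 - a_1)(b_3 - a_3) \geq 0$. For $T_2 - T_4$ the terms $-a_1a_2b_3$ and $-b_1b_2b_3$ cancel, leaving $a_1a_2a_3 - b_1a_2b_3 - a_1b_2a_3 + b_1b_2b_3$, which factors as $(b_2 - a_2)(b_1b_3 - a_1a_3) \geq 0$. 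In each case nonnegativity is immediate from $0 \leq a_i < b_i$.

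I do not anticipate a real obstacle: each of the three comparisons is an elementary polynomial identity. The only small piece of care is choosing the correct grouping of monomials so that the common factor $(b_i - a_i)$ emerges cleanly; once pulled out, the remaining factor is a difference between a product of $b$'s and a product of $a$'s and is therefore nonnegative. This is the same mechanism that presumably underlies the proofs of Lemmas~\ref{lem1}--\ref{lem4} in the appendix, and the lemma follows by combining the three inequalities above.
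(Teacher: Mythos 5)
Your proposal is correct and is essentially identical to the paper's own proof: the paper likewise establishes $T_2 \geq T_j$ for $j=1,3,4$ via the same three factorizations $(b_3-a_3)(b_1b_2-a_1a_2)$, $a_2(b_1-a_1)(b_3-a_3)$, and $(b_2-a_2)(b_1b_3-a_1a_3)$, and indeed uses only $0\le a_i<b_i$ without invoking \eqref{Omega} for this lemma. (Only a trivial wording slip: in $T_2-T_4$ it is the $-a_1a_2b_3$ terms that cancel, while $-b_1b_2b_3$ merely changes sign; your displayed expression is nonetheless correct.)
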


\begin{lemma}
\label{lem6}
\begin{align*}
 z_6&\colonequals\max\left\{b_1b_2a_3-a_1b_2b_3-b_1b_2b_3, \; a_1a_2a_3-a_1a_2b_3-a_1b_2b_3,\right.\\
   &\qquad  \qquad \qquad \qquad\qquad\qquad\;\;\left.b_1a_2a_3-b_1b_2b_3-a_1a_2b_3, \;a_1b_2a_3-2a_1b_2b_3\right\} \\
   & = a_1a_2a_3-a_1a_2b_3-a_1b_2b_3.
   \end{align*}
\end{lemma}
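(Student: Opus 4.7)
The plan is to show directly that the second candidate, $a_1a_2a_3 - a_1a_2b_3 - a_1b_2b_3$, dominates each of the other three by computing the pairwise differences and checking that each one is manifestly nonnegative. Denote the four candidates, in the order listed, by $T_1,T_2,T_3,T_4$, so that the claim is $T_2 \geq T_j$ for $j\in\{1,3,4\}$.

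For each comparison I expect the difference to factor cleanly as a product of two terms of the form $(b_i-a_i)$ times something that is obviously nonnegative from $0\leq a_i<b_i$. Concretely: in $T_2 - T_1$ the cubic terms $a_1b_2b_3$ cancel and the remainder regroups into $(b_1b_2-a_1a_2)(b_3-a_3)$; in $T_2 - T_3$ the $a_1a_2b_3$ terms cancel and the remainder regroups into $(b_1-a_1)(b_2b_3-a_2a_3)$; and $T_2 - T_4$ simplifies to $a_1(b_2-a_2)(b_3-a_3)$ after cancellation of the $a_1b_2b_3$ terms. In all three cases the factors are nonnegative, so the inequality follows.

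The only ``step'' to be careful about is the bookkeeping of signs when expanding the differences, since each $T_j$ contains one positive cubic monomial and two (or one doubled) negative cubic monomials; I would write out each difference in a single \texttt{align*} and let the cancellations be visible. Notably, only the hypothesis $0\leq a_i<b_i$ is used in each factored expression, so the \eqref{Omega} ordering is not needed here (analogous to what happens in some of the previous lemmas of this section), and the proof proceeds without appeal to the more subtle lexicographic conditions.

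Thus the main obstacle is not conceptual but purely computational, and the proof is essentially a sequence of three short factorization checks. Once the three differences are factored, the conclusion $z_6 = T_2 = a_1a_2a_3 - a_1a_2b_3 - a_1b_2b_3$ is immediate.
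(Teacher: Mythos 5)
Your proposal is correct and follows essentially the same route as the paper: both show $T_2$ dominates the other three candidates by factoring the pairwise differences as $(b_3-a_3)(b_1b_2-a_1a_2)$, $(b_1-a_1)(b_2b_3-a_2a_3)$, and $a_1(b_2-a_2)(b_3-a_3)$, each nonnegative from $0\leq a_i<b_i$ alone. Your observation that the ordering hypothesis $(\Omega)$ is not needed here is consistent with the paper, which invokes Lemma~\ref{lem10.1} only in Lemmas~\ref{lem3} and~\ref{lem8}.
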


\begin{lemma}
\label{lem7}
\begin{align*}
 z_7&\colonequals\max\left\{2b_1b_2b_3-b_1b_2a_3,\; a_1b_2b_3+b_1a_2b_3-a_1a_2a_3,\right.\\
   & \qquad  \qquad \qquad \qquad\quad\;\left.b_1a_2b_3+b_1b_2b_3-b_1a_2a_3, \;a_1b_2b_3+b_1b_2b_3-a_1b_2a_3\right\}\\
   & = 2b_1b_2b_3-b_1b_2a_3.
   \end{align*}
\end{lemma}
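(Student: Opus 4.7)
The plan is to show $z_7 = 2b_1b_2b_3 - b_1b_2a_3$ by verifying directly that this quantity dominates the other three expressions inside the maximum. For brevity label
\[
w_1 \colonequals 2b_1b_2b_3 - b_1b_2a_3, \qquad w_2 \colonequals a_1b_2b_3 + b_1a_2b_3 - a_1a_2a_3,
\]
\[
w_3 \colonequals b_1a_2b_3 + b_1b_2b_3 - b_1a_2a_3, \qquad w_4 \colonequals a_1b_2b_3 + b_1b_2b_3 - a_1b_2a_3.
\]
For each $i \in \{2,3,4\}$ I would compute $w_1 - w_i$ and try to rewrite it as a sum of products of nonnegative quantities built out of the factors $b_j - a_j \ge 0$ and $b_j \ge 0$.

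The comparisons with $w_3$ and $w_4$ are easy: direct regrouping gives
\[
w_1 - w_3 = b_1(b_2-a_2)(b_3-a_3), \qquad w_1 - w_4 = b_2(b_1-a_1)(b_3-a_3),
\]
and both right-hand sides are manifestly nonnegative. So these two inequalities reduce to one-line calculations.

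The main obstacle is the comparison $w_1 \ge w_2$, because $w_2$ mixes upper and lower bounds in every coordinate and $w_1 - w_2$ does not factor as a single product. The key step I would look for is the two-summand decomposition
\[
w_1 - w_2 = b_1(b_2-a_2)(b_3-a_3) + (b_1-a_1)(b_2b_3 - a_2a_3),
\]
which one verifies by simply expanding the right-hand side. Since $0 \le a_j < b_j$ for each $j$ (so in particular $b_2b_3 \ge a_2a_3$), both summands are nonnegative, giving $w_1 \ge w_2$ and completing the proof. Notably, this argument uses only the basic bounds $0 \le a_j < b_j$ and not the full ordering \eqref{Omega}, which is consistent with the symmetric role played by $w_1$ in the four-term maximum.
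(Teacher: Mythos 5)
Your proposal is correct and follows essentially the same route as the paper: both verify that $2b_1b_2b_3-b_1b_2a_3$ dominates each of the other three terms by writing each difference as a sum of products of nonnegative factors, with the last two comparisons factoring identically to the paper's. Your decomposition $w_1-w_2 = b_1(b_2-a_2)(b_3-a_3)+(b_1-a_1)(b_2b_3-a_2a_3)$ is an algebraically equivalent regrouping of the paper's $(b_1-a_1)\bigl(b_2(b_3-a_3)+b_3(b_2-a_2)\bigr)+a_1(b_2-a_2)(b_3-a_3)$, and your observation that only $0\le a_j<b_j$ is needed (not the full \eqref{Omega} ordering) matches the paper, which likewise does not invoke Lemma~\ref{lem10.1} here.
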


\begin{lemma}
\label{lem8}
\begin{align*}
z_8&\colonequals\max\left\{a_1b_2b_3+b_1a_2b_3-b_1b_2a_3, \; 2a_1a_2b_3-a_1a_2a_3,\right.\\
   &\qquad \qquad \qquad \qquad\quad \left.a_1a_2b_3+b_1a_2b_3-b_1a_2a_3, \;a_1a_2b_3+a_1b_2b_3-a_1b_2a_3\right\}\\
   &= a_1a_2b_3+b_1a_2b_3-b_1a_2a_3.
   \end{align*}
\end{lemma}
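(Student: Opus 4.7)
The plan is to verify directly that the third expression among the four candidates, namely $a_1a_2b_3+b_1a_2b_3-b_1a_2a_3$, dominates the other three. As with Lemmas \ref{lem1}--\ref{lem7}, this should reduce to checking three pairwise differences are nonnegative under the equivalent hypothesis $\Omega'$, i.e., $a_1/b_1 \leq a_2/b_2 \leq a_3/b_3$.

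Label the four candidates $A_1,A_2,A_3,A_4$ in the order listed, so that the claim is $A_3 \geq A_i$ for $i \in \{1,2,4\}$. I would compute each difference $A_3 - A_i$ by straightforward monomial cancellation, and then regroup terms into a product factorization. I expect
\begin{align*}
A_3 - A_1 &= (b_2-a_2)(b_1a_3 - a_1b_3),\\
A_3 - A_2 &= a_2(b_1-a_1)(b_3-a_3),\\
A_3 - A_4 &= (b_1a_2 - a_1b_2)(b_3 - a_3).
\end{align*}

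Nonnegativity of each factor then follows directly from $\Omega'$ together with the strict inequalities $a_i < b_i$: for $A_3 - A_1 \geq 0$, use $b_2 > a_2$ together with $a_1/b_1 \leq a_3/b_3$, i.e., $b_1 a_3 \geq a_1 b_3$; for $A_3 - A_2 \geq 0$, use $b_1 > a_1$, $b_3 > a_3$, and $a_2 \geq 0$; for $A_3 - A_4 \geq 0$, use $a_1/b_1 \leq a_2/b_2$, i.e., $b_1 a_2 \geq a_1 b_2$, together with $b_3 > a_3$.

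The main obstacle is nothing beyond routine algebraic bookkeeping: one must verify that the right pairing of monomials in each difference indeed yields the clean two-factor form displayed above. Since each pairwise difference collapses to a product of two manifestly nonnegative expressions, no tools beyond the $\Omega'$ inequalities already used throughout Lemmas \ref{lem1}--\ref{lem7} are required.
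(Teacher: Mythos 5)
Your three differences $A_3-A_1=(b_2-a_2)(b_1a_3-a_1b_3)$, $A_3-A_2=a_2(b_1-a_1)(b_3-a_3)$, and $A_3-A_4=(b_1a_2-a_1b_2)(b_3-a_3)$ all check out, and the nonnegativity of the factors $b_1a_3-a_1b_3$ and $b_1a_2-a_1b_2$ follows from the same reformulation of \eqref{Omega} (Lemma \ref{lem10.1}) that the paper invokes. This is exactly the paper's proof, up to the order in which the three comparisons are listed.
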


We obtain the following:
\begin{align*}
V(Q,Q,R) &= \frac{1}{3} \sum_{\bm{u} \in \mathcal{U}(Q)} h_R(\bm{u}) \\
    &= \frac{1}{3} \times \frac{(b_1-a_1)(b_2-a_2)}{2} \times \Big( z_1+z_2+z_3+z_4\Big),
    \end{align*}


 Using Lemmas \ref{lem1}-\ref{lem4}, we therefore have:
    \begin{align*}
 V(Q,&Q,R) \\
&= \frac{(b_1-a_1)(b_2-a_2)\left((b_1-a_1)(b_2b_3-a_2a_3)+(b_3-a_3)(b_1b_2-a_1a_2)\right)}{6}.
   \end{align*}

Following a similar argument we compute:
\begin{align*}V(Q,R,R) &= \frac{1}{3} \sum_{\bm{u} \in \mathcal{U}(R)} h_Q(\bm{u}) \\
                         &= \frac{1}{3}\times \frac{(b_1-a_1)(b_2-a_2)}{2} \times \Big(z_5+z_6+z_7+z_8\Big)
\end{align*}


 Using Lemmas \ref{lem5}-\ref{lem8}, we therefore have:
 \begin{align*}
   V(Q,&R,R)\\
   &= \frac{(b_1-a_1)(b_2-a_2)\left((b_1-a_1)(b_2b_3-a_2a_3)+(b_3-a_3)(b_1b_2-a_1a_2)\right)}{6}.
   \end{align*}

It is interesting to note that in this special case we have $V(Q,Q,R) =V(Q,R,R)$.

Now that we have calculated $V(Q,Q,R), V(Q,R,R), \Vol(Q)$ and $\Vol(R)$ in terms of the parameters $a_1,a_2,a_3,b_1,b_2,b_3$, all that remains is to compute the necessary integral.  In doing so we obtain:
{ \begin{align*}\Vol(\Po^3_H)&= (b_3-a_3)^{-3} \int_{a_3}^{b_3} \left((b_3-t)^3 \Vol(Q) + 3(b_3-t)^2(t-a_3) V(Q,Q,R)\right.\\
 &\qquad \qquad \quad \;\;\left.+ 3(b_3-t)(t-a_3)^2V(Q,R,R) + (t-a_3)^3\Vol(R)\right)\;dt \\[4mm]
 &=(b_1-a_1)(b_2-a_2)(b_3-a_3)\times \\
 &\hspace*{-9mm}\left(b_1(5b_2b_3-a_2b_3-b_2a_3-3a_2a_3) + a_1(5a_2a_3-b_2a_3-a_2b_3-3b_2b_3)\right)/24.
 \end{align*}}
This gives us the same formula as obtained via a triangulation method in \cite{SpeakmanLee2015}.

%

\section{Further Work.} \label{conc}

We have used mixed volume theory to obtain an alternative proof of Theorem~\ref{TheoremPH}.  To our knowledge, this is the first demonstration of this kind, showing how mixed volumes can be directly applied to perform computations relevant to the optimization community.  This new method of proof allows us to consider extensions of the problem that previously seemed too complex or tedious.  For example, throughout this work we have assumed that the bounds on the variables are nonnegative, however, a natural extension of the problem is to allow the variables to have mixed-sign domains.  Using the techniques of this paper, we are hopeful that volume formulae will also be obtained for this case and furthermore we expect these will also result in trilinear expressions.  This is the focus of our current work.

A different, yet still natural extension, is to ask if we can compute the volume of the convex hull of the graph of a \emph{general} multilinear term (over a box).  In the work of \cite{SpeakmanLee2015}, the jump to the case of general $n$ seemed to be computationally unrealistic.  Now, with this alternative method, it is still not certain that the leap will be possible, but we do have a natural approach to this problem.  In the case of general $n$, we can write down the volume recursively as:
$$\Vol(\Po_H^n)=  \int_{a_n}^{b_n} \Vol \left ( \frac{b_n-t}{b_n-a_n}Q_{n-1} + \frac{t-a_n}{b_n-a_n}R_{n-1} \right )dt,$$
where each of the polytopes $Q_{n-1}$ and $R_{n-1}$, are no longer tetrahedra, but \emph{are} closely related to $\Po_H^{n-1}$ (the extreme points of both are the extreme points of $\Po_H^{n-1}$ with a scaling applied to the first component).  Being able to express the volume of $\Po_H^n$ in terms of the volumes of $Q_{n-1}$ and $R_{n-1}$ is hopefully the first step in obtaining more general results, beginning with $n=4$.

\section{Acknowledgements.}

The authors gratefully acknowledge support from the Deutsche Forschungsgemeinschaft (DFG, German Research Foundation) - 314838170, GRK 2297 MathCoRe.

\section{Bibliography.}
  \bibliographystyle{elsarticle-num}
 \bibliography{Speakman_Averkov_MixedVolume_revision_Arxiv}


 \section{Appendix.}
\label{A}

We state a simple lemma, with proof, that will be useful in proving the subsequent lemmas.  Recall that throughout, we assume \eqref{Omega}.

\begin{lemma}[See also Lemma 10.1 from \cite{SpeakmanLee2015}]
\label{lem10.1}
\begin{equation*}
 \eqref{Omega} \Longleftrightarrow  \;(\Omega^{''}) \; \begin{cases} 0 \leq a_i < b_i \;\text{for}\; i=1,2,3,  \\
b_1a_2-a_1b_2 \geq 0,\; b_1a_3-a_1b_3 \geq 0, \; b_2a_3-a_2b_3 \geq 0.\end{cases}
\end{equation*}

\end{lemma}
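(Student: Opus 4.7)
The plan is to reduce the equivalence to two clean polynomial identities that factor the quantities appearing in the $(\Omega)$ inequalities. Since the variable-bound part $0 \leq a_i < b_i$ is identical in both $(\Omega)$ and $(\Omega'')$, I only need to translate between the two inequalities of $(\Omega)$ and the three sign conditions in $(\Omega'')$.

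First I would verify the identities
\begin{align*}
(b_1 a_2 b_3 + a_1 b_2 a_3) - (a_1 b_2 b_3 + b_1 a_2 a_3) &= (b_1 a_2 - a_1 b_2)(b_3 - a_3), \\
(b_1 b_2 a_3 + a_1 a_2 b_3) - (b_1 a_2 b_3 + a_1 b_2 a_3) &= (b_1 - a_1)(b_2 a_3 - a_2 b_3),
\end{align*}
by direct expansion. This is pure algebra and is the computational core of the argument.

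For the direction $(\Omega)\Rightarrow(\Omega'')$, since $b_3-a_3>0$ and $b_1-a_1>0$, the first inequality of $(\Omega)$ together with the first identity forces $b_1 a_2 - a_1 b_2 \geq 0$, and the second inequality of $(\Omega)$ with the second identity forces $b_2 a_3 - a_2 b_3 \geq 0$. The remaining condition $b_1 a_3 - a_1 b_3 \geq 0$ then follows by transitivity: if $a_2,b_2>0$, dividing gives $a_1/b_1 \leq a_2/b_2 \leq a_3/b_3$ and hence $a_1/b_1 \leq a_3/b_3$; in the edge case $a_2=0$, the inequality $b_1 a_2 - a_1 b_2 \geq 0$ forces $a_1=0$ (since $b_2>0$), whence $b_1 a_3 - a_1 b_3 = b_1 a_3 \geq 0$ automatically. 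For the direction $(\Omega'')\Rightarrow(\Omega)$, I simply multiply the first inequality $b_1 a_2 - a_1 b_2 \geq 0$ by the positive factor $b_3 - a_3$ and the inequality $b_2 a_3 - a_2 b_3 \geq 0$ by the positive factor $b_1 - a_1$, then apply the two identities to recover the two inequalities of $(\Omega)$.

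The only subtle point, and the one requiring care, is the case analysis around $a_i = 0$ when invoking the ratio form $a_1/b_1 \leq a_2/b_2 \leq a_3/b_3$ to derive the third $(\Omega'')$ inequality from the other two; the rest is routine factoring. Everything else is essentially bookkeeping against the two polynomial identities above.
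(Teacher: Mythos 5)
Your proof is correct and takes essentially the same approach as the paper: both directions rest on the factorizations $(b_1a_2b_3+a_1b_2a_3)-(a_1b_2b_3+b_1a_2a_3)=(b_1a_2-a_1b_2)(b_3-a_3)$ and $(b_1b_2a_3+a_1a_2b_3)-(b_1a_2b_3+a_1b_2a_3)=(b_1-a_1)(b_2a_3-a_2b_3)$, combined with $b_i-a_i>0$. The only cosmetic difference is that you obtain $b_1a_3-a_1b_3\geq 0$ by transitivity of the ratios $a_i/b_i$ (where the case split on $a_2=0$ is actually unnecessary, since only $b_1,b_2,b_3>0$ is needed to pass to the ratio form), whereas the paper applies a third factorization of the same kind, $(b_2-a_2)(b_1a_3-a_1b_3)$, to the chained inequality $a_1b_2b_3+b_1a_2a_3\leq b_1b_2a_3+a_1a_2b_3$.
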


\begin{proof}
$\eqref{Omega} \Rightarrow b_1a_2b_3 + a_1b_2a_3 - a_1b_2b_3 -b_1a_2a_3 =  (b_3-a_3)(b_1a_2-a_1b_2)\geq 0$.
This implies $b_1a_2-a_1b_2 \geq 0$, because $b_3-a_3 > 0$.

Conversely, if $(\Omega^{''})$ holds, then $(b_3-a_3)(b_1a_2-a_1b_2)\geq0$ and we obtain $b_1a_2b_3 + a_1b_2a_3 \geq  a_1b_2b_3 + b_1a_2a_3$.

Identical arguments can be made for $b_1a_3-a_1b_3 \geq 0$ and $b_2a_3-a_2b_3 \geq 0$. \end{proof}

We now state the proofs of the four lemmas used in calculating $V(Q,Q,R)$:

%

\begin{proof}[Proof of Lemma \ref{lem1}]
\begin{align*}
b_1b_2b_3-b_1a_2a_3-b_1b_2a_3 -(a_1a_2b_3-a_1a_2a_3-b_1a_2a_3)&\\
=(b_1b_2-a_1a_2)(b_3-a_3) &\geq0. \\
b_1b_2b_3-b_1a_2a_3-b_1b_2a_3 -(b_1a_2b_3-2b_1a_2a_3)= b_1(b_2-a_2)(b_3-a_3) &\geq0. \\
b_1b_2b_3-b_1a_2a_3-b_1b_2a_3 -(a_1b_2b_3-b_1b_2a_3-a_1a_2a_3)&\\
= (b_2b_3-a_2a_3)(b_1-a_1) &\geq0.
\end{align*}
\end{proof}


\begin{proof}[Proof of Lemma \ref{lem2}]
\begin{align*}
b_1b_2b_3-a_1b_2a_3-b_1b_2a_3 -(a_1a_2b_3-a_1b_2a_3-a_1a_2a_3)&\\
=(b_1b_2-a_1a_2)(b_3-a_3) &\geq0. \\
b_1b_2b_3-a_1b_2a_3-b_1b_2a_3 -(b_1a_2b_3-b_1b_2a_3-a_1a_2a_3)&\\
= (b_1b_3-a_1a_3)(b_2-a_2)  &\geq0. \\
b_1b_2b_3-a_1b_2a_3-b_1b_2a_3 -(a_1b_2b_3-2a_1b_2a_3) = b_2(b_1-a_1)(b_3-a_3) &\geq0.
\end{align*}
\end{proof}


\begin{proof}[Proof of Lemma \ref{lem3}]
\begin{align*}
a_1b_2a_3+b_1b_2a_3-a_1b_2b_3 -(a_1b_2a_3+b_1a_2a_3-a_1a_2b_3)&\\
=(b_1a_3-a_1b_3)(b_2-a_2) &\geq0. \\
a_1b_2a_3+b_1b_2a_3-a_1b_2b_3 -(b_1a_2a_3+b_1b_2a_3-b_1a_2b_3)&\\
= (b_1a_2-a_1b_2)(b_3-a_3) &\geq0. \\
a_1b_2a_3+b_1b_2a_3-a_1b_2b_3 -(2b_1b_2a_3-b_1b_2b_3) = b_2(b_1-a_1)(b_3-a_3) &\geq0.
\end{align*}
The first two inequalities follow from Lemma \ref{lem10.1}.
\end{proof}


\begin{proof}[Proof of Lemma \ref{lem4}]
\begin{align*}
&2a_1a_2a_3-a_1a_2b_3 -(a_1a_2a_3+b_1a_2a_3-b_1a_2b_3)=a_2(b_1-a_1)(b_3-a_3) \geq0 \\
&2a_1a_2a_3-a_1a_2b_3 -(a_1a_2a_3+a_1b_2a_3-a_1b_2b_3)= a_1(b_2-a_2)(b_3-a_3) \geq0 \\
&2a_1a_2a_3-a_1a_2b_3 -(a_1b_2a_3+b_1a_2a_3-b_1b_2b_3) = \\ & \qquad \qquad \qquad\qquad\qquad\;\; (b_1-a_1)(b_2b_3-a_2a_3)+a_1(b_2-a_2)(b_3-a_3) \geq0.
\end{align*}
\end{proof}

And the proofs of the four lemmas used in calculating $V(Q,R,R)$:


\begin{proof}[Proof of Lemma \ref{lem5}]
\begin{align*}
a_1a_2a_3-a_1a_2b_3-b_1a_2b_3 -(b_1a_2a_3-2b_1a_2b_3)=a_2(b_1-a_1)(b_3-a_3) &\geq0 \\
a_1a_2a_3-a_1a_2b_3-b_1a_2b_3 -(a_1b_2a_3-a_1a_2b_3-b_1b_2b_3)=& \\
(b_1b_3-a_1a_3)(b_2-a_2) &\geq0 \\
a_1a_2a_3-a_1a_2b_3-b_1a_2b_3 -(b_1b_2a_3-b_1a_2b_3-b_1b_2b_3) = &\\
(b_1b_2-a_1a_2)(b_3-a_3) &\geq0.
\end{align*}
\end{proof}


\begin{proof}[Proof of Lemma \ref{lem6}]
\begin{align*}
a_1a_2a_3-a_1a_2b_3-a_1b_2b_3 -(b_1a_2a_3-b_1b_2b_3-a_1a_2b_3)&\\
=(b_2b_3-a_2a_3)(b_1-a_1) &\geq0 \\
a_1a_2a_3-a_1a_2b_3-a_1b_2b_3 -(a_1b_2a_3-2a_1b_2b_3)= a_1(b_2-a_2)(b_3-a_3) &\geq0 \\
a_1a_2a_3-a_1a_2b_3-a_1b_2b_3 -(b_1b_2a_3-a_1b_2b_3-b_1b_2b_3)&\\
 = (b_1b_2-a_1a_2)(b_3-a_3) &\geq0.
\end{align*}
\end{proof}


\begin{proof}[Proof of Lemma \ref{lem7}]
\begin{align*}
&2b_1b_2b_3-b_1b_2a_3 -(a_1b_2b_3+b_1a_2b_3-a_1a_2a_3)\\
& \;\;\;\;\;\quad=(b_1-a_1)\left(b_2(b_3-a_3)+b_3(b_2-a_2)\right)+a_1(b_2-a_2)(b_3-a_3) \geq0\\
&2b_1b_2b_3-b_1b_2a_3 -(b_1a_2b_3+b_1b_2b_3-b_1a_2a_3)= b_1(b_2-a_2)(b_3-a_3) \geq0 \\
&2b_1b_2b_3-b_1b_2a_3 -(a_1b_2b_3+b_1b_2b_3-a_1b_2a_3) = b_2(b_1-a_1)(b_3-a_3) \geq0.
\end{align*}
\end{proof}


\begin{proof}[Proof of Lemma \ref{lem8}]
\begin{align*}
a_1a_2b_3+b_1a_2b_3-b_1a_2a_3 -(2a_1a_2b_3-a_1a_2a_3)=a_2(b_1-a_1)(b_3-a_3) &\geq0 \\
a_1a_2b_3+b_1a_2b_3-b_1a_2a_3 -(a_1a_2b_3+a_1b_2b_3-a_1b_2a_3)&\\
= (b_1a_2-a_1b_2)(b_3-a_3) &\geq0 \\
a_1a_2b_3+b_1a_2b_3-b_1a_2a_3 -(a_1b_2b_3+b_1a_2b_3-b_1b_2a_3)& \\
= (b_1a_3-a_1b_3)(b_2-a_2) &\geq0.
\end{align*}
The last two inequalities follow from Lemma \ref{lem10.1}.
\end{proof}






\end{document}